\begin{document}
\title{Fast Navigation with Icosahedral Golden Gates} 
\author{Terrence Richard Blackman}
\address[Terrence Blackman]{MEC \& IAS }
\email{tblackman@ias.edu}
\urladdr{http://terrenceblackman.com}
\author{Zachary Stier}
\address[Zachary Stier]{UC Berkeley}
\email{zstier@berkeley.edu}
\date{\today}
\subjclass{Primary 68Q12, 81P65, 11R52; Secondary 51F25}
\keywords{Quantum computing, quaternion algebras}

\begin{abstract}
An algorithm of Ross and Selinger for the factorization of diagonal elements of PU(2) to within distance $\varepsilon$ was adapted by Parzanchevski and Sarnak into an efficient probabilistic algorithm for any element of PU(2) using at most effective $3\log_p\frac{1}{\varepsilon^{3}}$ factors from certain well-chosen sets associated to a number field and a prime $p$. The icosahedral super golden gates are one such set associated to $\mathbb{Q}(\sqrt{5})$. We leverage recent work of Carvalho Pinto, Petit, and Stier to reduce this bound to $\frac{7}{3}\log_{59}\frac{1}{\varepsilon^3}$, and we implement the algorithm in Python. This represents an improvement by a multiplicative factor of $\log_259\approx5.9$ over the analogous result for the Clifford+$T$ gates. This is of interest because the icosahedral gates have shortest factorization lengths among all super golden gates. 
\end{abstract}

\maketitle

\section{Introduction}
\label{sec:Intro}
Lubotzky, Phillips, and Sarnak, in a series of papers \cite{lps1986explicit,lps1986hecke,lubotzky1987hecke,lubotzky1988ramanujan},  explicitly constructed topological generators with optimal covering properties for the compact Lie group $\PU(2)$. 
Such generators for projective unitary groups find an interesting application in quantum computing where a fundamental design challenge is to determine an optimal, fault-tolerant decomposition of a quantum gate.
For classical computing a single bit state is an element of $\{0,1\}$. 
A classical gate implements functions on binary inputs. The only nontrivial single bit logic operation is \texttt{NOT}, which takes 0 to 1 and 1 to 0 (though it is also possible for the codomain to contain more than one bit). 
In the quantum setting, single {\em qubit} states are points 
\begin{equation*}
 u = (u_{1}, u_{2}) \in \mathbb{C}^{2}  
\end{equation*}
up to a mutual phase $e^{2\pi i\gt}$ in each component, such that 
\begin{equation*}
\abs{u}^2 = \abs{u_1}^2 + \abs{u_2}^2 = 1.
\end{equation*}
A gate here {\em cannot} output more than one qubit, and thus must be a $2 \times 2$ projective unitary. 

A \emph{universal gate set} is a finite set of gates, $S:=\{ s_1,s_2,\cdots, s_k : s_\ell \in \PU(2)\}$, that can approximate, in the bi-invariant metric on the compact Lie group, any matrix arbitrarily well.
That is, the group generated by $S$ must be topologically dense in $\PU(2)$. 
The Solovay--Kitaev theorem \cite{nielsen2010quantum} guarantees that universal gate sets can efficiently approximate quantum operations for unitaries on a constant number of qubits.
Universal gate sets typically consists of a finite group $C\leqslant\PU(2)$ together with an extra element $\tau$, which we will take to be an involution, so that the subgroup generated by $C$ and $\tau$ covers $\PU(2)$ with minimal $\tau$-count, and simultaneously navigates $\PU(2)$ efficiently. That is, given some gate in $\PU(2)$ and desired precision $\ve$, there is an efficient algorithm (polynomial-time in the input size) that with high probability finds a short word in $S$ to that precision, typically of length $O(\log(\nicefrac{1}{\ve}))$. 
The deep insight of Sarnak \cite{sarnakletter} is that the construction and optimality of universal single-qubit quantum gate sets can be understood in terms of the arithmetic of quaternion algebras.
Specifically, identifying $C$ with a subgroup of the group of units in a definite quaternion algebra over a totally real number field provides a coherent framework within which one can systematically address the question of optimality of topological generators for $\PU(2).$

The question we address within this context is  that of finding the ``best'' topological generators of $\PU(2)$ among those universal gate sets, the {\em super golden gates} of Parzanchevski and Sarnak \cite{ParzanchevskiSarnak}, which are known to possess optimal covering properties and efficient navigation.
In particular, there are only finitely many super golden gates \cite[p.870--871]{ParzanchevskiSarnak} and the respective finite subgroups $C$ can be realized as the group of symmetries of of the Platonic solids: for the tetrahedron, $A_{4}$; for the cube and the octahedron, $S_{4}$; and for the dodecahedron and icosahedron, $A_{5}$.

We demonstrate that the \emph{icosahedral} super golden gates admit a factorization directly analogous to the one obtained by Stier \cite{Stier}, and that this gives the best-known preconstant to the first-power logarithm in the approximation length. 
The exact factor of the improvement is $\log_259\approx5.9$. 
This improvement is due to the fact that the icosahedral super golden gates have a growth rate that is on the order of $59^k$, while gate set studied in \cite{Stier}, the Clifford+$T$ gates, have growth rate of order $2^k$. 
We note also that the \emph{icosahedral} super-golden-gates represent the greatest number of distinct gates with bounded $\tau$-count. 

The commonly used Clifford+$T$ gate set provides a set of elementary gates that is universal and consists only of a small number of gates, all of which are very well compatible with many established error correction schemes and can be physically implemented in all quantum technologies that seem promising for large-scale quantum computations \cite{niemann2020advanced}.
In this case the finite group $C$ is the Clifford group $C_{24}$ of order 24 in $\PU(2).$
At least one non-Clifford gate must be added to the basic gate set in order to achieve universality. 
A common choice for this additional gate is the $T$-gate (or $\frac{\pi}{8}$-gate). 
The $T$-gate is not the only possible extension of the Clifford group but it is considered to be the most practical one. 
This is due to the availability of fault-tolerant implementations of the $T$-gate. 
For this reason, the Clifford+$T$ gate set is considered the most promising candidate for practical quantum computing.
We recall, for the convenience of the reader, some of the salient details of the single qubit Clifford+$T$ gate set.
Let $$H:=\frac{i}{\sqrt{2}}\begin{pmatrix}1&\phantom{-}1\\1&-1\end{pmatrix} \text{ and } T:=\begin{pmatrix}e^{\nicefrac{i\pi}{8}}&\\&e^{\nicefrac{-i\pi}{8}}\end{pmatrix}.$$ 
We take $S:=\{H, T\}$ \cite{sarnakletter,RossSelinger,Stier}. The Clifford+$T$ universal gate set is an example of a golden gate set \cite{sarnakletter,ParzanchevskiSarnak}. 
The remarkable observation of \cite{sarnakletter} is that the $H$ and $T$ gates of the Clifford+$T$ gate set come from the definite quaternion algebra 
$$\cA:=\left(\frac{-1,-1}{\Q(\sqrt{2})}\right).$$
Kliuchnikov, Maslov, and Mosca \cite{KMM} characterized the group $\<S\>$ and demonstrated an efficient algorithm to factor exactly, by carefully studying the powers of $\sqrt{2}$ in the denominators of matrix entries. 
Ross and Selinger \cite{RossSelinger} then focused on diagonal matrices near to $\<S\>$, with the goal of finding $\ve$-close factorizations of length $c\log_2(\nicefrac{1}{\ve^3})$; the base of 2 is intrinsic to the structure of the Clifford+$T$ gates.
By studying {\em upright sets} in the plane, which function analogously to the simplices of Lenstra's algorithm \cite{Lenstra,Paz}, they achieved the leading coefficient of $1+o(1)$ (in that restricted case). 
Parzenchevski and Sarnak \cite{ParzanchevskiSarnak} generalized Ross and Selinger's approach to any golden gate set (with the base of the logarithm correspondingly changing per the gate set), and by considering Euler angles reached the coefficient of $3+o(1)$ for approximations to generic elements. 
Working with the (finite) LPS Ramanujan graphs (see \secref{sec:ram} and \cite{lubotzky1988ramanujan}), Carvalho Pinto and Petit \cite{CP} factorize in the equivalent of $\nicefrac{7}{3}+o(1).$
This approach is adapted by Stier \cite{Stier} for the same coefficient with Clifford+$T$ gates.
We combine aspects of the techniques of \cite{CP,Stier}, in the proposed icosahedral setting, to reduce this bound to $\frac{7}{3}\log_{59}(\nicefrac{1}{\ve^3}).$

\section{Super Golden Gates}
\label{sec:SuperGoldenGates}
We recall here essential ideas related to the arithmetic of quaternion algebras \cite{Vigneras,voight}, $S$-arithmetic groups \cite[\S C]{morris2001introduction}, and Ramanujan graphs \cite{lubotzky2020ramanujan} which lead to the icosahedral super golden gates \cite{ParzanchevskiSarnak}. 
\subsection{Quaternion Algebras}
A quaternion algebra $\cA$ over a field $\F$ is a central simple algebra of dimension four over $\F$ (cf. \cite[\S5.2]{Miyake} or \cite[p.15]{Stefan1}).
It follows from Wedderburn's structure theorem on simple algebras \cite{Dubisch} that every quaternion algebra over any field $\F$ is either isomorphic to $\M_2(\F)$ or a division algebra with center $\F$ \cite{Lewis}.
If the characteristic of $\F$ is not 2 then it is always possible to find a basis $\{1, i ,j ,k\}$ for $\cA$ over $\F$ such that
$$i^{2}=\alpha, j^{2}=\beta, k=ij=-ji$$
where $\alpha,\beta\in\F^\times$.  
We designate such an algebra by 
$$\cA=\left(\frac{\alpha,\beta}{\F}\right).$$
Evidently $q\in\cA$ is of the form
\begin{equation}
q=x_{0}+x_{1}i+x_{2}j+x_{3}k,\label{eq:gen form alg elt}
\end{equation} 
where $x_\ell\in\F$. 
In this notation, Hamilton's quaternions arise as
$$\h = \left(\frac{-1,-1}{\R}\right).$$

The conjugate of $q$ as in \eqref{eq:gen form alg elt}, denoted by $\bar{q}$, is equal to $x_{0}-x_{1}i-x_{2}j-x_{3}k$. 
For each $q\in\cA$ we define the (reduced) norm map $\nm:\cA\to\F$ by 
$$\nm(q):=q\bar{q}=x_{0}^{2}-\alpha x_{1}^{2}-\beta x_{2}^{2}+\alpha\beta x_{3}^{2}$$
and the (reduced) trace map $\tr:\cA\to\F$ by 
$$\tr(q):=q+\bar{q}=2x_{0}.$$
We note that every element $q\in\cA$ satisfies the quadratic equation 
$$q^{2}-\tr(q)q+\nm(q)=0.$$
It is possible to make everything completely explicit by embedding $\cA$ in $\M_2\lpr{F\lpr{\sqrt{\alpha}}}$ by, for
example, 
$$i\mapsto\begin{pmatrix}\sqrt{\alpha}\\&-\sqrt{\alpha}\end{pmatrix}\text{ and }
j\mapsto\begin{pmatrix}&\beta\\1\end{pmatrix}.$$
Evidently, if $\alpha$ is a square in $\F$, then
$\cA\cong\M_2(\F)$. 
A necessary and sufficient condition for
$\cA\cong\M_2(\F)$ is that $\alpha$ is the norm of an element in
$\F(\sqrt{\beta})$ with respect to $\F$
\cite{voight}. 
Of course, one may interchange $\alpha$ and $\beta$ in this remark.

Specializing to quaternion algebras over the rational field $\Q$, or over one of the completions $\Q_{p}$ or $\R$ (the completion $\Q_\infty$), 
let $\cA$ be a quaternion algebra over $\Q$ and let $p$ be a prime (or $\infty$). 
We define 
$$\cA_p := \cA\underset{\Q}{\otimes}{\Q_p }.$$
Either $\cA_{p} \cong \M_2(\Q_p)$ or $\cA_{p}$ is a division algebra over $\Q_p$. 
In this case we will have that $\cA_p\cong\h_p$ and say that $\cA$ is {\em ramified} at $p.$ 
When $\cA_p \cong \M_2(\Q_p)$ we will say that $\cA$ is {\em unramified} or {\em split} at $p.$ 
If $\cA$ is ramified at $\infty$ it is called a {\em definite rational quaternion algebra}---that is, if $\cA$ is definite then
$$\cA_\infty = \cA\underset{\Q}{\otimes}{\R}\cong\h.$$
If $\cA$ is split at $\infty$, it is called an {\em indefinite rational quaternion algebra}---that is, if $\cA$ is indefinite then
$$\cA_{\infty} = \cA\underset{\Q}{\otimes}{\R}\cong \M_2(\R).$$

\subsubsection{Definite quaternion algebras over totally real number fields}
We now turn to the quaternion algebras that are the objects of interest in this paper, definite quaternion algebras over totally real number fields.
Let $\cA$ be a quaternion algebra over a number field $\F,$ $\nu$ be a place of $\F$, and $\F_\nu$ be the completion of $\F$ at $\nu$. 
Recall that a \emph{totally real number field} is a finite algebraic extension of $\Q$ all of whose complex embeddings lie entirely in $\R.$ 
For example, the field $\Q(\sqrt{d})$ is totally real for positive, integral $d.$
If every infinite place of $F$ is ramified in $\cA,$ we say that $\cA$ is a totally definite quaternion algebra. 
Consequently, if $\cA$ is a totally definite quaternion algebra over a number field $\F,$ then $\F$ is necessarily totally real. 
Moreover, if $\F$ is a quadratic field, the number of finite places which are ramified in $\cA$ is even.

\subsubsection{Unit groups in orders in definite quaternion algebras over totally real number fields}
Let $\cA$ be quaternion algebra over a field $\F$ and let $O_\F$ be the ring of integers in $\F$. 
An element $q\in\cA$ is {\em integral} if $\nm(q),\tr(q)\in O_\F$. 
An {\em order} $\cO\subseteq\cA$ is a $O_\F$-algebra of integral elements such that \cite[p.2]{andreas1}
$$\cO\underset{O_\F}{\otimes}{\F}\cong\cA.$$ 
Observe that as an example of an order we always have 
\begin{equation}
    \cO:=O_\F\oplus O_\F i\oplus O_\F j\oplus O_\F k.
\end{equation}
If $\cO$ is an $O_\F$-order in a definite quaternion algebra $\cA$ over the totally real field $\F$ then the group of units of reduced norm 1, i.e.
\begin{equation}
\cO^1:=\{\alpha\in\cO : \nm(\alpha)=1\}
\end{equation} 
is a finite group \cite[p.289]{voight}.
These finite groups will correspond to the groups of rotational symmetries of the platonic solids \cite[p.172--173]{voight}.

\subsubsection{Ramanujan graphs}\label{sec:ram}
Two key ideas are needed for the construction of golden gate and super golden gate sets: a $S$-arithmetic unit quaternion group and a Ramanujan graph.
We outline the essential ideas of Lubotzky, Phillips, and Sarnak's \cite{lubotzky1988ramanujan} ``LPS'' construction of these objects.

Ramanujan graphs are graphs whose spectrum is bounded optimally. 
Let $X$ be a finite connected $k$-regular graph and $A$ its adjacency matrix.
\begin{defn}
The graph $X$ is called a Ramanujan graph if every eigenvalue $\lambda$ of $A$ satisfies either $\lvert \lambda \rvert = k$ or $\lvert \lambda \rvert \le 2\sqrt{k-1}.$ 
\end{defn}
LPS Ramanujan graphs \cite{davidoff2003elementary} arise as Cayley graphs of $\PSL(2,\F_q)$ (for $\F_q$ the finite field on $q$ elements).
When considering these graphs we interchangeably refer to their elements by their group-theoretic properties as matrices and their graph-theoretic relations as vertices.
\cite{lubotzky1988ramanujan} establishes that for any prime $p \equiv 1 \pmod{4}$ there are infinitely many ($p+1$)-regular Ramanujan graphs.
We use the notation $X^{p, q}$ where $p$ and $q$ are distinct primes congruent to 1 modulo 4 to represent such graphs.
The construction comes from number theory by way of the generalized Ramanujan conjecture \cite[p.873]{ParzanchevskiSarnak}. 
The symmetric space
$\PGL(2, \Q_{p})/\PGL(2, \Z_{p})$ can be identified with a ($p+1$)-regular infinite tree. 
$\PGL(2,\Z[\nicefrac{1}{p}])$ acts from the left on $\PGL(2, \Q_{p})/\PGL(2, \Z_{p})$. The generalized Ramanujan conjecture, a theorem in this case, implies that the quotient of $\PGL(2, \Q_{p})/\PGL(2, \Z_{p})$ by any congruence subgroup of $\PGL(2,\Z[\nicefrac{1}{p}])$, a ($p + 1$)-regular graph, is a Ramanujan graph. 
By considering an appropriate congruence subgroup of $\PGL(2,\Z[\nicefrac{1}{p}])$ we can identify the quotient of this symmetric space with a Cayley graph associated to $\PSL(2, \F_q)$ or $\PGL(2, \F_q)$, depending on the value of the Legendre symbol $\leg{p}{q}$ \cite{sardari2015diameter}.

\subsubsection{$p$-arithmetic unit quaternion groups}
Golden gate and super golden gate sets for $\PU(2)$ require the construction of a $p$-arithmetic group (a special case of $S$-arithmetic groups, where $S$ is a collection of places of $\Z$). Let $G\le\GL(n)$ be an algebraic group defined over $\Z[\nicefrac{1}{p}]$ with $G(\R)$ compact.
A $p$-arithmetic group $\Lambda$ is a subgroup of $G(\Z[\nicefrac{1}{p}])\leq G(\R)\times G(\Q_p)$, and has {\em congruence subgroup} $\Lambda(N):=\{g\in\Lambda : g\equiv I \pmod{N}\}$. That is, in our compact Lie group $\PU(2)$ we take only rational numbers whose denominators are powers of a fixed prime $p$ as coefficients in the matrices. 

One can also make the same construction over the ring of integers $O_\F$ of a totally real number field $\F\supsetneq\Q$, at which point the role of $p$ is played by $\fp$ a prime ideal of $O_\F$, so that the inverted elements (those allowed in denominators) are now all of $\fp\cap\F^\times$. 

\subsection{Golden gates}
Golden gates are special unit groups in quaternion algebras over totally real number fields derived from the $p$-arithmetic groups \cite{sarnakletter}.
They give variants of optimal generators for $\PU(2)$ and connect the arithmetic of quaternion algebras to quantum computation on a single qubit.
The ``golden'' characterization is to be understood by way of an interesting link between $p$-arithmetic unit groups coming from unit quaternion groups and the Ramanujan graphs $X^{p, q}$, which we explicate below.

Recall once more that in classical computation, one decomposes any function into basic logical gates such as \texttt{XOR}, \texttt{AND}, and \texttt{NOR}, and that in quantum computation, the classical bits are replaced by qubits, which are vectors in projective Hilbert space $\C P^{n}$, and that the logical gates are \emph{all }the elements of the projective unitary group $\PU(2).$ 
Let $S$ be a subgroup of $\PU(2)$ and denote by $S^{(\ell)}$ the set of $\ell$-fold products of elements in $S$. 
If $\left\langle S\right\rangle =\bigcup\limits_{\ell\geq0}S^{(\ell)}$
is dense in $\PU(2)$ (with respect to the standard bi-invariant metric $d^{2}(A,B):=1-\frac{|\tr(A^{*}B)|}{2}$) then $S$ is universal. 
That is, any gate can be approximated with arbitrary precision as a product of elements of $S$.

The notion of of a golden gate set is much stronger, requiring \cite{lubotzky2020ramanujan}:
\begin{enumerate}
    \item {\em Optimal covering of $\PU(2)$ by $\<S\>$}: for every $\ell$ the set $S^{(\ell)}$ distributes
in $\PU(2)$ as a perfect sphere packing (or randomly placed points) would,
up to a negligible factor.
\item {\em Approximation}: given $A\in \PU(2)$ and $\varepsilon>0$, there is an efficient algorithm to find some $A'\in B_{\varepsilon}(A)$ (the $\varepsilon$-ball around $A$) such that $A'\in S^{(\ell)}$ with
$\ell$ (almost) minimal.
\item {\em Compiling}: given $A\in\left\langle S\right\rangle $ as a matrix,
there is an efficient algorithm to write $A$ as a word in $S$ of
the smallest possible length.
\end{enumerate}
These requirements ensure that any gate can be approximated and compiled
as an efficient circuit using the gates in $S$.

\subsection{Super golden gates}
Each super golden gate set is composed of a finite group $C$ and
an involution $\tau$, which lie in a $\fp$-arithmetic
group for $\fp$ a prime ideal of the integers $O_\F$ of
a totally definite quaternion algebra $\cA$, over a totally real number
field $\F$. 
We require that: 
\begin{enumerate}
\item $C$ acts simply transitively on the neighbors of any given vertex in $X^p$, the ($p+1$)-regular tree, for $p$ the norm of $\fp$.
\item $\tau$ is an involution which takes a vertex to one of its neighbors.
\end{enumerate}

$\fp$-arithmetic unit quaternion group act transitively on the vertices of the corresponding $X^p$. 
The $\fp$-arithmetic groups which act transitively on the vertices of $X^p$ are called the {\em golden gates} and the $\fp$-arithmetic groups which act transitively on both the vertices and edges of $X^p$ are called the {\em super golden gates}. 

\section{Icosahedral Super Golden Gates}

Recall that there are only finitely many such super golden gate sets and in each case the finite group $C$ is identified with the group of rotational symmetries of a platonic solid: for the tetrahedron, $A_{4}$; for the cube and the octahedron, $S_{4}$; and for the dodecahedron and icosahedron, $A_{5}$.
Each of these finite groups can be precisely identified with a $\fp$-arithmetic unit quaternion group coming from one of the following quaternion algebras \cite{sarnakletter}:
	\begin{itemize}
		\item tetrahedral gates: $ \cA = \left(\frac{-1,-1}{\Q}\right)$; 
		\item octahedral gates: $ \cA = \left(\frac{-1,-1}{\Q(\sqrt{2})}\right)$; and 
		\item icosahedral gates: $ \cA = \left(\frac{-1,-1}{\Q(\sqrt{5})}\right)$.
	\end{itemize}
We consider the quaternion algebra $\cA$ over the {\em golden field} $\Q(\sqrt{5})$ \cite[p.895]{ParzanchevskiSarnak}.
A maximal order $\cO$ in $\cA:=\left(\frac{-1,-1}{\Q(\sqrt{5})}\right)$ is given by
the ring of icosians.
The unit group $\cO^{1}$ is the platonic icosahedral group,  generated by $\left\langle\frac{1+i+j+k}{2}, \frac{1+\varphi^{-1}j+\varphi k}{2}\right\rangle$, where $\vf:=\frac{1+\sqrt{5}}{2}$ is the golden ratio, and $\cO^{1}\cong A_{5}$
In $\PU(2)$, this corresponds to 
$$C_{60}=\<\begin{pmatrix} 1 &1\\i&-i\end{pmatrix}, \begin{pmatrix} 1& \varphi-\nicefrac{i}{\varphi}\\ \varphi + \nicefrac{i}{\varphi}& -1\end{pmatrix}\>=:\<\rho,\gs\>.$$
We take as our involution 
$$\tau=\tau_{60}:=\begin{pmatrix} 2+\varphi &1-i \\1+i&-2-\varphi \end{pmatrix}.$$
For the prime ideal $\fp=\left(7+5\varphi\right)$ one has $\F_\fp\cong\Q_{59}$,
and the generated group $\Gamma=\left\langle C_{60},\tau_{60}\right\rangle $
is the full ($7+5\vf$)-arithmetic group of $\cO$. 
As such, we establish:
\begin{theorem}
Subject to standard number-theoretic heuristic conjectures, there exists a factorization of any  $g\in\PU(2)$ to precision $\ve$ using $\tau$-count at most $(\nicefrac{7}{3}+o(1))\log_{59}(\nicefrac{1}{\ve^3}).$
\end{theorem}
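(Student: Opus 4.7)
The plan is to port Stier's adaptation of the Carvalho Pinto-Petit framework from the Clifford+$T$ setting (base $p=2$) to the icosahedral super golden gates (base $p=59$), carrying out each step over the ring of icosians $\cO$ in $\cA=\left(\frac{-1,-1}{\Q(\sqrt{5})}\right)$ with the prime ideal $\fp=(7+5\vf)$ in place of the analogous maximal order over $\Z[\sqrt{2}]$. The three structural ingredients are (i) an Euler-angle decomposition reducing the generic approximation problem to the approximation of three diagonal matrices, (ii) a Ross-Selinger norm-equation solver producing each diagonal approximation with $\tau$-count $(1+o(1))\log_{59}(\nicefrac{1}{\ve^3})$, and (iii) a navigation argument on a finite LPS-type Ramanujan quotient of $\Gamma=\<C_{60},\tau_{60}\>$, which shortens the total $\tau$-count from the naive $3\log_{59}(\nicefrac{1}{\ve^3})$ of Parzanchevski-Sarnak down to $\nicefrac{7}{3}\log_{59}(\nicefrac{1}{\ve^3})$.

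For step (i), I would use the explicit $C_{60}$-generators $\rho$ and $\gs$ to express $g$ in Euler-angle form relative to a pair of orthogonal axes stabilized by elements of $C_{60}$, so that $g=d_1\,c\,d_2\,c'\,d_3$ with each $d_i$ diagonal in the chosen basis and $c,c'\in C_{60}$; an $\ve$-approximation to $g$ in the bi-invariant metric then reduces to $O(\ve)$-approximations to the $d_i$. For step (ii), one solves a norm equation $\nm(\alpha)=59^n$ over $\cO$ with the requirement that the image of $\alpha$ in $\PU(2)$ lies in a small upright set about the target diagonal, in the spirit of \cite{RossSelinger,ParzanchevskiSarnak}. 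The key input is a representation-counting heuristic for the binary quadratic form cut out by $\nm$ on the appropriate diagonal sublattice of $\cO$; subject to this heuristic, such an $\alpha$ exists once $n=(1+o(1))\log_{59}(\nicefrac{1}{\ve^3})$, and the resulting word in $\<C_{60},\tau_{60}\>$ has $\tau$-count exactly $n$.

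For step (iii), I would follow the Stier-CP blueprint: rather than approximating all three diagonals independently at cost $\log_{59}(\nicefrac{1}{\ve^3})$ each, one obtains the third diagonal via a meet-in-the-middle / birthday-paradox argument on a finite Ramanujan quotient of $\Gamma$ by a suitable congruence subgroup, whose success is underwritten by the spectral gap of the corresponding LPS Ramanujan graph built from $\fp$. With the walk lengths and covering requirements balanced as in \cite{CP,Stier}, this effectively replaces the cost $\log_{59}(\nicefrac{1}{\ve^3})$ of the third diagonal by $\nicefrac{1}{3}\log_{59}(\nicefrac{1}{\ve^3})$, producing the claimed total $\left(1+1+\nicefrac{1}{3}\right)\log_{59}(\nicefrac{1}{\ve^3})=\nicefrac{7}{3}\log_{59}(\nicefrac{1}{\ve^3})$.

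The main obstacle will be the norm-equation step (ii): the arithmetic of $\Z[\vf]$ is richer than that of $\Z$ or even $\Z[\sqrt{2}]$ because its unit group is infinite, so one must simultaneously control both archimedean embeddings of $\Q(\sqrt{5})$ when counting representations of $59^n$ by the relevant quadratic form. This is precisely where the ``standard number-theoretic heuristic conjectures'' of the statement enter, and where the most care is required. Once the icosian analog of the Ross-Selinger upright-set analysis is in place, the Ramanujan-graph navigation of step (iii) should transfer essentially verbatim from \cite{CP,Stier} into the icosahedral vocabulary, and the claimed bound follows by summing the three pieces.
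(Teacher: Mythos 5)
Your overall arithmetic $1+1+\tfrac{1}{3}=\tfrac{7}{3}$ and the identification of the correct base $p=N(\fp)=59$ are right, but the mechanism you propose for the short $\tfrac{1}{3}$-piece does not match the paper, and as described it has a genuine gap. The paper does not pass through a finite LPS Ramanujan quotient at all, nor does it decompose $g$ into an Euler form $d_1\,c\,d_2\,c'\,d_3$. Instead it uses \lemref{lem:old lem} (Stier's lemma) to write $g \approx \delta_1\gamma\delta_2$ where $\delta_1,\delta_2$ are diagonal rotation matrices to be approximated by the \secref{sec:Diagonal} algorithm, and the central element $\gamma$ is \emph{not} a diagonal --- it is a generic element of $\Gamma$ that is only required to satisfy the single weak scalar constraint $\big|\abs{\alpha_\gamma}-\abs{\alpha_g}\big|<\ve$. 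Because that constraint is so loose, the set of candidate lattice vectors $(x_0,x_1)\in\Z[\vf]^2$ at $\tau$-count $k$ cuts out a region of area on the order of $59^k\ve$ in the $\Z^2$ picture via the two real embeddings, so Lenstra's algorithm heuristically produces a valid $\gamma$ as soon as $59^k\ve\in O(1)$, i.e.\ $k\sim\tfrac{1}{3}\log_{59}(\nicefrac{1}{\ve^3})$. That density argument, not a spectral gap or birthday collision on a finite graph, is the source of the $\tfrac{1}{3}$.

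Your step (iii) as written would not obviously work: a meet-in-the-middle argument on a finite quotient $\Gamma/\Gamma(N)$ shortens a \emph{graph path} between two prescribed vertices, but the third object you need is a length-$O(\log_{59}\nicefrac{1}{\ve^3})$ word approximating a \emph{continuous} diagonal $d_3\in\PU(2)$ to precision $\ve$, which lives in the Archimedean place, not in the finite quotient. The spectral gap of $X^{p,q}$ controls mixing modulo $q$, not the distance to $d_3$ in the bi-invariant metric, so the birthday bound does not transfer to the approximation cost. Carvalho Pinto--Petit indeed work on the finite graphs, but the whole point of Stier's adaptation --- and of \secref{sec:AlgShortPath} here --- is to replace that step with the algebraic lattice search for $\gamma$ and the tuning lemma (\lemref{lem:old lem}). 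You would also need to state the \secref{sec:Diagonal} norm-equation algorithm (Problems \ref{prob:x1}, \ref{prob:x0}, \ref{prob:x2,x3}, solved via Lenstra together with the sum-of-two-squares machinery of \thmref{thm:many SOTS} in $\Z[\vf]$, which in turn requires \thmref{thm:norm-euclidean} that $\Z[i,\vf]$ is norm-Euclidean) to make step (ii) concrete; your concern about the infinite unit group of $\Z[\vf]$ is well placed and is exactly why \eqref{eq:galois sq sum bd}, the simultaneous bound on both Galois embeddings, appears in the paper.
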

In particular, if $g$ is near to a diagonal matrix then we just apply the approach in \secref{sec:Diagonal} (for additive error) to get a path of length at most $(1+o(1))\log_{59}(\nicefrac{1}{\ve^3})$, and otherwise run the approach in \secref{sec:AlgShortPath}. 

Notice that no other choice of golden gates is both super and has a greater logarithm base, so that these are the ``best'' generators given the present state of knowledge.

\section{Nearby Elements in PU(2), and Factoring in $\Gamma$}
\label{sec:FactoringGamma}
In this section, we establish a key technical lemma regarding approximations in the matrix group, and the method for exact synthesis for elements of the relevant subgroup. 

For $\alpha,\beta\in\mathbb{C}$ satisfying $\abs{\alpha}^2+\abs{\beta}^2=1$ we write
$$u(\alpha,\beta):=\begin{pmatrix}\alpha & \beta \\ -\bar{\beta} & \bar{\alpha}\end{pmatrix},$$ and 
for $\theta\in\mathbb{R}$ we write
$$u(\theta):=u(e^{i\theta},0)=\begin{pmatrix}e^{i\theta} \\ & e^{-i\theta}\end{pmatrix}.$$

We restate below \cite[Lemma 5]{Stier} which holds independent of our choice of universal gate set. 
\begin{lemma}\label{lem:old lem}
    Select absolute constants $\delta,\varepsilon_0>0$ and put $C=\sqrt{\half+\half\left(\frac{2+\delta}{\varepsilon_0}\right)^2}$. Take $\gamma_1$ and $\gamma_2$ in either $\SU(2)$ or $\PU(2)$ and write them as $\gamma_1=u(\alpha_1,\beta_1)$ and $\gamma_2=u(\alpha_2,\beta_2)$. If $\abs{\abs{\alpha_1}-\abs{\alpha_2}}<\varepsilon$ for some $\varepsilon<\delta$ and $\min\{\abs{\alpha_1},\abs{\alpha_2}\}<\sqrt{1-\varepsilon_0^2}$ then for 
    \begin{align*}
        \theta_1&=\frac{1}{2}(\arg\alpha_1-\arg\alpha_2+\arg\beta_1-\arg\beta_2), & \delta_1&=u(\theta_1)\\
        \theta_2&=\frac{1}{2}(\arg\alpha_1-\arg\alpha_2-\arg\beta_1+\arg\beta_2), & \delta_2&=u(\theta_2)
    \end{align*}
    we have the approximation $\delta_1\gamma_2\delta_2$ to $\gamma_1$, satisfying
    $$d(\gamma_1,\delta_1\gamma_2\delta_2)<C\varepsilon.$$
\end{lemma}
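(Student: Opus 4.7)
The plan is to use the diagonal matrices $\delta_1$ and $\delta_2$ to rotate the phases of the entries of $\gamma_2$ so that they agree with those of $\gamma_1$, reducing the bound to a purely magnitude-level estimate.

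First I would compute directly that
\[ \delta_1\gamma_2\delta_2 = u\bigl(e^{i(\theta_1+\theta_2)}\alpha_2,\,e^{i(\theta_1-\theta_2)}\beta_2\bigr), \]
and verify that the prescribed $\theta_1,\theta_2$ satisfy $\theta_1+\theta_2=\arg\alpha_1-\arg\alpha_2$ and $\theta_1-\theta_2=\arg\beta_1-\arg\beta_2$. Consequently $\delta_1\gamma_2\delta_2=u(|\alpha_2|e^{i\arg\alpha_1},\,|\beta_2|e^{i\arg\beta_1})$. Substituting into $d^2(A,B)=1-\tfrac{1}{2}|\tr(A^*B)|$ and expanding $\tr(\gamma_1^*\delta_1\gamma_2\delta_2)$, the four contributions pair into two positive real terms, yielding
\[ d^2(\gamma_1,\delta_1\gamma_2\delta_2) = 1 - |\alpha_1||\alpha_2| - |\beta_1||\beta_2|. \]

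Next I would invoke the algebraic identity $2d^2 = (|\alpha_1|-|\alpha_2|)^2 + (|\beta_1|-|\beta_2|)^2$, which follows from $|\alpha_i|^2+|\beta_i|^2=1$ by direct expansion. The first summand is at most $\varepsilon^2$ by hypothesis. For the second, the conjugate trick
\[ |\beta_1|-|\beta_2| = \frac{(|\alpha_2|-|\alpha_1|)(|\alpha_1|+|\alpha_2|)}{|\beta_1|+|\beta_2|} \]
expresses the difference in terms of the controlled quantity $|\alpha_1|-|\alpha_2|$ multiplied by a factor whose numerator is strictly less than $2$.

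The main delicacy is controlling the denominator $|\beta_1|+|\beta_2|$, which could a priori be arbitrarily small. This is exactly what the hypothesis $\min\{|\alpha_1|,|\alpha_2|\}<\sqrt{1-\varepsilon_0^2}$ is designed to prevent: at least one of $|\beta_1|,|\beta_2|$ exceeds $\varepsilon_0$, bounding the denominator below by $\varepsilon_0$ and hence $\bigl||\beta_1|-|\beta_2|\bigr|<2\varepsilon/\varepsilon_0$. The extra $\delta$ in $(2+\delta)$ introduces enough slack to make the final inequality strict. Combining,
\[ d^2 < \frac{\varepsilon^2}{2} + \frac{(2+\delta)^2\varepsilon^2}{2\varepsilon_0^2} = C^2\varepsilon^2, \]
as claimed.
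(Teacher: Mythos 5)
Your proof is correct and, as far as one can tell, follows the same natural route as the source: the paper does not actually prove this lemma but simply restates \cite[Lemma 5]{Stier}, so there is no in-paper proof to compare against. The computation $\delta_1\gamma_2\delta_2 = u\bigl(e^{i(\theta_1+\theta_2)}\alpha_2,\,e^{i(\theta_1-\theta_2)}\beta_2\bigr)$ is right, the phase-matching $\theta_1+\theta_2=\arg\alpha_1-\arg\alpha_2$ and $\theta_1-\theta_2=\arg\beta_1-\arg\beta_2$ checks out, the trace evaluates to the positive real number $2(|\alpha_1||\alpha_2|+|\beta_1||\beta_2|)$, and the identity $2d^2=(|\alpha_1|-|\alpha_2|)^2+(|\beta_1|-|\beta_2|)^2$ follows from the unit-norm constraint exactly as you say. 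The conjugate trick together with the lower bound $|\beta_1|+|\beta_2|>\varepsilon_0$ (which is precisely what the hypothesis on $\min\{|\alpha_1|,|\alpha_2|\}$ supplies) then gives $\bigl||\beta_1|-|\beta_2|\bigr|<2\varepsilon/\varepsilon_0$, and everything closes up.

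Two small remarks worth making precise. First, you never actually use the hypothesis $\varepsilon<\delta$: your chain yields $d^2<\tfrac{1}{2}\varepsilon^2+\tfrac{2}{\varepsilon_0^2}\varepsilon^2$, which is strictly smaller than $C^2\varepsilon^2$ simply because $4<(2+\delta)^2$ for any $\delta>0$. So you have in fact proved a slightly sharper statement, and it would be cleaner to say ``since $|\alpha_1|+|\alpha_2|\le 2<2+\delta$, the displayed bound by $C^2\varepsilon^2$ follows'' rather than attributing the strictness of the final inequality to $\delta$ (the intermediate inequalities are already strict). Second, the phrase ``the numerator is strictly less than $2$'' should really be justified by noting $\min\{|\alpha_1|,|\alpha_2|\}<\sqrt{1-\varepsilon_0^2}<1$ and the other factor is at most $1$, which handles the degenerate case $\beta_1=0$ or $\beta_2=0$ as well (there the arbitrary convention for $\arg 0$ is harmless since the corresponding trace contribution vanishes). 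Neither of these affects correctness.
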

Loosely, this result can be thought of as a sufficient condition to transform one $2\times2$ unitary into (approximately) another based only on a weak condition and by ``tuning'' with diagonal (rotation) matrices on either side. The condition is merely that the ``starting'' and ``target'' matrices have top-left entries nearby in absolute value, and that neither is very near to 1. 

We now move to factorizing. Put $\eta=7+5\vf$ for the sequel. Observe that is is a positive real number, and the generator of $\fp$ above. For our purposes, we will encounter elements of $\Ga=\<\rho,\gs,\tau\>\le\PU(2)$ only as $\Z[\vf]$-quaternions with norm a power of $\eta$, envisioned as elements of the Cayley graph for $\<\rho,\gs,\tau\>\le\Q(\vf)\U_2(\Z[\vf])$, which \cite{ParzanchevskiSarnak} shows acts transitively with respect to the distance measure of $\tau$-count, which can be detected by counting the power of $\eta$ in the quaternion norm, after quotienting out $\Z[\vf]$-scalars. This leads to the following factoring algorithm. 
\begin{algorithm}\label{alg:factor in G}
    Let $C$ be a set of representatives of $\<\rho,\gs\>\le\PU(2)$ lifted to $\Q(\vf)\U_2(\Z[\vf])$. Given $\ga$ a lift of some element of $\Ga$ with $\tau$-count $k$, it can be factored by determining which (unique) $c\in C$ gives rise to $\ga c\tau$ of $\tau$-count $k-1$, which in turn requires no more than 60 multiplications of three matrices. $c$ is found if and only if $\ga c\tau$ has all coefficients divisble by $\eta$ in $\Z[\vf]$. 
\end{algorithm}
This algorithm has the base case of simply comparing $\ga$ against all 60 elements of $C$, another trivial computational task. The idea of the general case is that $\ga$ has the representation
$$c_0\prod\limits_{i\in[n]}\tau c_i$$
where $c_i$ is not the identity for $i\neq0,n$. Then $c$ will be projectively equal to $c_n\inv$, giving rise to
$$\ga c\tau=c_0\prod\limits_{i\in[n-1]}\tau c_i\tau c_nc\tau=zpc_0\prod\limits_{i\in[n-1]}\tau c_i$$
for $z=\det(c_nc)$ (coprime to $\eta$).

\section{Algorithm for Short Paths to Diagonal Elements}
\label{sec:Diagonal}
\subsection{Algorithm}
We proceed by, for given diagonal element $\gd=u(\gt)$ and $\ve$, seeking $\g\in\G$ with $d(\gd,\g)<\ve$. Knowing that $\g$ is projectively equal to an element 
$$\begin{pmatrix}\phantom{-}x_0+x_1i&x_2+x_3i\\-x_2+x_3i&x_0-x_1i\end{pmatrix}$$
for $x_0,x_1,x_2,x_3\in\Z[\vf]$ satisfying 
\begin{equation}
    x_0^2+x_1^2+x_2^2+x_3^2=\eta^m \label{eq:sum of 4 squares diag}
\end{equation}
for some $m\in\N$, we also have that it is sufficient to satisfy
\begin{equation}
    x_0\cos\gt+x_1\sin\gt\ge \eta^{\nicefrac{m}{2}}(1-2\ve^2) \label{eq:dot product diag}
\end{equation}
(note that this is precisely \cite[(13) and Problem 7.4]{RossSelinger} (see there for the derivation)), following the readily computable observation that $\norm{\gd-\g}\ge\sqrt{2}d(\gd,\g)$ and setting the goal $\norm{\gd-\g}\le\sqrt{2}\ve$, since \cite{RossSelinger} operates with respect to the operator norm. Observe, unfortunately, that \eqref{eq:sum of 4 squares diag} is a quadratic constraint and \eqref{eq:dot product diag} is a linear constraint. We now explain how to transform them into a practicable sequence of integer programs.\footnote{Fundamentally, this is the same idea as in \cite{RossSelinger}, as their study of upright ellipses accomplishes the same task as Lenstra's algorithm.}

Fix $m$. We seek $x_\ell\in\Z[\vf]$ satisfying \eqref{eq:sum of 4 squares diag} and \eqref{eq:dot product diag}. Define $y_\ell=\nicefrac{x_\ell}{\eta^{\nicefrac{m}{2}}}$. Artificially add the constraint 
\begin{equation}
    1-\ve^2-y_1\sin\gt\ge0. \label{eq:y1 artificial constraint}
\end{equation}
Observe from \eqref{eq:sum of 4 squares diag} that $y_0^2\le1-y_1^2$. Then, we have the sequence of implications (mainly by algebraic manipulation)
\begin{align}
    y_0\cos\gt&\ge1-\ve^2-y_1\sin\gt\ge0\nonumber\\
    y_0^2\cos^2\gt&\ge(1-\ve^2)^2+y_1^2\sin^2\gt-2(1-\ve^2)y_1\sin\gt\nonumber\\
    \cos^2\gt&\ge(y_1-(1-\ve^2)\sin\gt)^2-(1-\ve^2)^2\sin^2\gt+(1-\ve^2)^2\nonumber\\
    \cos^2\gt(2\ve^2-\ve^4)&\ge(y_1-(1-\ve^2)\sin\gt)^2\nonumber\\
    \abs{y_1-(1-\ve^2)\sin\gt}&\le\abs{\cos\gt}\sqrt{2-\ve^2}\ve\label{eq:y1 main cons}
\end{align}
and so we have reduced our consideration to just one of the four variables. As $[\Q(\vf):\Q]=2$, we explicitly work with the Galois group elements
\begin{align*}
    \gs_+:\,1&\longmapsto1\\
        \vf&\longmapsto\vf,\\
    \gs_-:\,1&\longmapsto1\\
        \vf&\longmapsto\vf^\bullet,
\end{align*}
where $\vf^\bullet$ is $\vf$'s Galois conjugate $\frac{1-\sqrt{5}}{2}=1-\vf$, both of which are real embeddings, yielding the additional constraints
\begin{equation}
    \abs{\gs_\pm y_1}\le1.\label{eq:y1 triv cons}
\end{equation}
Now, putting $x_1=c+d\vf$ for some $c,d\in\Z$, \eqref{eq:y1 artificial constraint}, \eqref{eq:y1 main cons}, and \eqref{eq:y1 triv cons} transform into 
\begin{problem}\label{prob:x1}
Given $m$, find $(c,d)\in\Z^2$ satisfying the five linear constraints
\begin{align*}
    (c+d\vf)\sin\gt&\le\eta^{\nicefrac{m}{2}}(1-\ve^2),\\
    \abs{c+d\gs_\pm\vf}&\le(\gs_\pm\eta)^{\nicefrac{m}{2}},\\
    \abs{c+d\vf-\eta^{\nicefrac{m}{2}}(1-\ve^2)\sin\gt}&\le\eta^{\nicefrac{m}{2}}\abs{\cos\gt}\sqrt{2-\ve^2}\ve.
\end{align*}
\end{problem}
The existence of such a pair is determinable in $O(\poly m)$ time, by Lenstra's algorithm \cite{Lenstra,Paz}. 

Putting $x_0=a+b\vf$ for some $a,b\in\Z$, for a particular solution to \probref{prob:x1}, \eqref{eq:sum of 4 squares diag} and \eqref{eq:dot product diag} transform into
\setcounter{problem}{-1}
\begin{problem}\label{prob:x0}
Given $m$ and $x_1=c+d\vf$, find $(a,b)\in\Z^2$ satisfying the four linear constraints
\begin{align*}
    \abs{a+b\gs_\pm\vf}&\le(\gs_\pm\eta)^{\nicefrac{m}{2}}\sqrt{1-(\gs_\pm x_1)^2},\\
    (a+b\vf)\cos\gt&\le\eta^{\nicefrac{m}{2}}(1-x_1\sin\gt),\\
    (a+b\vf)\cos\gt&\ge\eta^{\nicefrac{m}{2}}(1-\ve^2-x_1\sin\gt).
\end{align*}
\end{problem}
Again, the existence of such a pair is determinable in $O(\poly m)$ time, by Lenstra's algorithm. 

Finally, if we have solutions to \probref{prob:x0} and \probref{prob:x1}, we seek to solve
\setcounter{problem}{22}
\begin{problem}\label{prob:x2,x3}
Given $m$, $x_0=a+b\vf$, and $x_1=c+d\vf$, find $(x_2,x_3)\in\Z[\vf]^2$ satisfying \eqref{eq:sum of 4 squares diag}. 
\end{problem}
Here we change techniques. The objective is to write $\eta^m-x_0^2-x_1^2$ as a sum of squares in $\Z[\vf]$. Assuming efficient factorization in $\Z$ (or $\Z[\vf]$, also a PID), \probref{prob:x2,x3} is efficiently solvable via \thmref{thm:many SOTS} (the general approach being basically identical to the classical algorithms for $\Z[i]$ or $\Z[e^{\nicefrac{i\pi}{8}}]$, cf. \cite[\S C]{RossSelinger} for the latter). 

We have succeeded in the core of the algorithm. To handle given $\gd$, begin by fixing $m=0$. For given $m$, attempt to solve \probref{prob:x1}; for each solution, attempt to solve \probref{prob:x0}; for each solution, attempt to solve \probref{prob:x2,x3}. If this results in a tuple $(x_0,x_1,x_2,x_3)$ satisfying all three problems, halt and return 
$$\frac{1}{\eta^{\nicefrac{m}{2}}}\begin{pmatrix}\phantom{-}x_0+x_1i&x_2+x_3i\\-x_2+x_3i&x_0-x_1\end{pmatrix}.$$
Otherwise, if all possibilities have been exhausted, increment $m$. 

\subsection{Analysis}
We refer the reader to \cite[\S2.3]{ParzanchevskiSarnak}'s analysis of timing and correctness for Ross and Selinger's algorithm generalized to any golden gate set, where the conclusion is that for desired precision $\ve$, the factorization length becomes $(1+o(1))\log_{59}(\nicefrac{1}{\eps^3})$ with required computational time remaining $O(\poly\log(\nicefrac{1}{\eps}))$. 

\section{Algorithm For Short Paths}
\label{sec:AlgShortPath}
Here we largely adopt the structure and wording from \cite[\S4 and \S5]{Stier}, as the core concepts and heuristics that make the algorithm work are the same in the icosahedral setting. 

\subsection{Algorithm}
Select absolute constants $\tilde{\ve},\ve_0>0$. Take any $g=u(\alpha,\beta)\in\PU(2)$ where $\abs{\alpha}<\sqrt{1-\ve_0^2}$, and pick $\ve<\tilde{\ve}$. We wish to approximate $g$ in $d$ using $\g\in\G$ of the form
\begin{equation}
    \g=\frac{1}{\eta^{\nicefrac{k}{2}}}\begin{pmatrix}\phantom{-}x_0+x_1i&x_2+x_3i\\-x_2+x_3i&x_0-x_1i\end{pmatrix}\label{eq:g def}
\end{equation}
having $k$, the factorization length, minimized, and so we begin with $k=0$. (We also have $x_0,x_1,x_2,x_3\in\Z[\vf]$.) In particular, the objective is to approximate $g$ as $\g_1\g\g_2$ where $\g_1,\g_2\in\G$ approximate well-chosen diagonals computable using \secref{sec:Diagonal}, and $\g\in\G$ has factorization computable using \algref{alg:factor in G}. We will see that $\g$ is designed to have factorization typically shorter than that of $\g_1$ and $\g_2$, giving rise to the desired improvement. 

In order to apply \lemref{lem:old lem} we need to have $\abs{\frac{x_0+x_1i}{\eta^{\nicefrac{k}{2}}}}=\sqrt{\frac{x_0^2+x_1^2}{\eta^k}}$ near $\abs{\alpha}$ (that is, within $\ve$). Because $\abs{\frac{x_0+x_1i}{\eta^{\nicefrac{k}{2}}}}+\abs{\alpha}\ge\abs{\alpha}$ which is fixed, it suffices to find candidate values for $x_0,x_1\in\Z[\vf]$ with $\abs{\abs{\frac{x_0+x_1i}{\eta^{\nicefrac{k}{2}}}}^2-\abs{\alpha}^2}<\ve\abs{\alpha}$, rewritten to
\begin{equation}
    \abs{x_0^2+x_1^2-\abs{\alpha}^2\eta^k}<\ve\abs{\alpha}\eta^k.\label{eq:linear abs constraint}
\end{equation}
Viewing $\g$ as an element of $\SU(2)$, we also have $\det\g=1$, i.e. $x_0^2+x_1^2+x_2^2+x_3^2=\eta^k$. As $x_\ell\in\Z[\vf]\subset\Q(\vf)\subset\R$, it follows that $\gs_\pm(x_0^2+x_1^2)+\gs_\pm(x_2^2+x_3^2)=\gs_\pm(\eta^k)$, and so 
\begin{equation}
    \gs_\pm(x_0^2+x_1^2)\le (\gs_\pm\eta)^k.\label{eq:galois sq sum bd}
\end{equation}
Now, let $m=x_0^2+x_1^2\in\Z[\vf]$. Considering $\Z[\vf]$ as an integer lattice, we adapt \eqref{eq:linear abs constraint} and \eqref{eq:galois sq sum bd} and seek to solve
\begin{align}
    \abs{m-\abs{\alpha}^2\eta^k}&<\ve\abs{\alpha}\eta^k\label{eq:final lenstra 1}\\
    m&\le\eta^k\label{eq:final lenstra 2}\\
    m^\bullet&\le(\eta^\bullet)^k\label{eq:final lenstra 3}\\
    m,m^\bullet&\ge0\label{eq:final lenstra 4}
\end{align}
which are convex constraints on $m$'s lattice components. Since this is an integer programming problem in two dimensions, we apply Lenstra's algorithm \cite{Lenstra,Paz} to efficiently list all such lattice points $m$. For each $m$, using \thmref{thm:many SOTS}, we attempt to write $m$ as a sum of two squares; if possible, say $m=x_0^2+x_1^2$, we then attempt to write $\tilde{m}=\eta^k-m$ as a sum of two squares. If possible, say $\tilde{m}=x_2^2+x_3^2$, so we simply halt and return $\g$ corresponding to \eqref{eq:g def}. However, if $\tilde{m}$ may not be represented as a sum of two squares, we simply move on to the next value of $m$ and try this process again. If this fails for all $m$ arising from $k$, we increment $k$ and run Lenstra's algorithm for the new inequalities. 

Supposing we have halted and constructed $\g$, we compute $\gd_1$ and $\gd_2$ guaranteed by \lemref{lem:old lem}. These are efficiently approximable by \secref{sec:Diagonal} to $\g_1$ and $\g_2$, respectively. Chaining together the three approximations as $\g_1\g\g_2$ gives the final desired approximation to $g$. 

\subsection{Analysis}
We begin the analysis by establishing the $\tau$-count and tightness of the approximation. In particular, $d(\g_1,\gd_1)<\ve$ and $d(\g_2,\gd_2)<\ve$ with factorization lengths of $\g_\ell$ each up to $(1+o(1))\log_{59}(\nicefrac{1}{\ve^3})$. By \lemref{lem:old lem}, $d(\g,\gd_1\g\gd_2)<C\ve$. Therefore, $d(g,\g_1\g\g_2)<(C+2)\ve$ (by the triangle inequality) and since $\g$ has a factorization of $\tau$-count up to $\lpr{\frac{1}{3}+o(1)}\log_{59}(\nicefrac{1}{\ve^3})$, this constitutes a factorization of an element in $g$'s neighborhood of $\tau$-count up to $\lpr{\frac{7}{3}+o(1)}\log_{59}(\nicefrac{1}{\ve^3})$. 

The efficiency of this algorithm---that is, that it runs in time $O(\poly\log(\nicefrac{1}{\ve}))$---is because we expect to halt when $59^k\ve\in O(1)$ (so only $k\approx\frac{1}{3}\log_{59}(\nicefrac{1}{\ve^3})$ calls are expected), and only call polynomially-many polynomial-time subroutines. The dominant subroutines are calls to Lenstra's algorithm which as shown in \cite{Lenstra} runs in time polynomial in the size of the constraints for any fixed dimension $n$. Indeed, here we have only $n=2$ dimensions, $m=6$ linear constraints (two per absolute value), and the largest value $a$ in the constraints is $\eta^k$, so the runtime is polynomial in $nm\log a\in \Theta(k)$. 

The reason we expect to halt when $59^k\ve\in O(1)$ is that heuristically, we expect to halt when the area enclosed by \eqref{eq:final lenstra 1}--\eqref{eq:final lenstra 4} is $O(\poly\log(\nicefrac{1}{\eps}))$. Conveniently, the region is a rectangle since the vectors $\langle1,\vf\rangle$ and $\langle1,\vf^\bullet\rangle$ are orthogonal, so assuming in the limit that $\ve\ll\min\left\{\abs{\alpha},\frac{1-\abs{\alpha}^2}{\abs{\alpha}}\right\}$ (so that \eqref{eq:final lenstra 2} and the ``bottom'' inequality of \eqref{eq:final lenstra 1} are redundant), we compute length-times-width of
$$\frac{2\ve\abs{\alpha}\eta^k}{\sqrt{1+\vf^2}}\cdot\frac{(\eta^\bullet)^k}{\sqrt{1+(1-\vf)^2}}=\frac{2\abs{\alpha}}{\sqrt{5}}\cdot59^k\ve\in O(\poly\log(\nicefrac{1}{\ve}))$$
whence we find $k\in(1+o(1))\log_{59}(\nicefrac{1}{\ve})$ as expected. 

When attempting to write elements of $\cO$ as a sum of two squares, we primarily rest on a belief, in the style of Cram\'er's conjecture and a conjecture of Sardari \cite[($*$)]{Sardari} that sums of squares are dense in $\N$. Seeking to analogize \cite[($*$)]{Sardari} in particular, we note that the operative aspect is that a dense cluster of lattice points will represent at least one sum of two squares, and that such a point thus will be found quickly through Lenstra's algorithm. 

The significance of this result is to accomplish a factorization in $\PU(2)$ in analogue to \cite{Stier}, but using a gate set with additional desirable properties beyond those enjoyed by the Clifford+$T$ gates.

\section{Implementation Details and Examples}
\label{sec:Ex}
Our algorithm has been implemented for proof-of-concept purposes in Python, and the code is available at \url{https://math.berkeley.edu/~zstier/icosahedral}. Included in this implementation are:
\begin{itemize}
    \item An implementation of Lenstra's algorithm for special cases (\texttt{convex.py}). 
    \item The rings $\Z[\vf]$, $\Z[i,\vf]$, and $H(\Z[\vf])$ (\texttt{rings.py} and \texttt{quaternions.py}). 
    \item Solutions to sum-of-two-square problems in $\Z[\vf]$ (\texttt{rings.py}). 
    \item Factorization of elements of $H(\Z[\vf])$ which are of norm a power of $\eta$ (\texttt{quaternions.py}). 
    \item Efficient factorization of diagonal elements of $\PU(2)$, as outlined in \secref{sec:Diagonal} (\texttt{diagonal.py}). 
    \item Efficient factorization of general elements of $\PU(2)$, as outlined in \secref{sec:AlgShortPath} (\texttt{approx.py}). 
\end{itemize}
In the remainder of this section, we demonstrate the efficacy of this factorization technique on the two more ``classical'' single-qubit quantum gate generators; recall
$$H=\frac{i}{\sqrt{2}}\begin{pmatrix}1&\phantom{-}1\\1&-1\end{pmatrix} \text{ and } T=\begin{pmatrix}e^{\nicefrac{i\pi}{8}}&\\&e^{\nicefrac{-i\pi}{8}}\end{pmatrix}.$$
In both cases, pick precision $\ve=\nicefrac{1}{10^{10}}$. 

For the first example of $T$, we yield 
\begin{align*}
    T &\approx \xi_0\tau\xi_1\tau\xi_2\tau\xi_3\tau\xi_4\tau\xi_5\tau\xi_6\tau\xi_7\tau\xi_8\tau\xi_9\tau\xi_{10}\tau\xi_{11}\tau\xi_{12}\tau\xi_{13}\tau\xi_{14}\tau\xi_{15}\tau\xi_{16}\tau\xi_{17}\tau\xi_{18}\tau\xi_{19} \\
        &=(\sigma\sigma\rho\sigma\rho)\tau(\rho\sigma\sigma\rho\sigma\sigma)\tau(\sigma\rho\sigma\rho\sigma)\tau(\rho\sigma\rho\sigma\rho\sigma)\tau(\rho\sigma\sigma\rho \\
        &\quad\,\sigma\rho\sigma\sigma\rho\sigma\rho)\tau(\sigma\sigma\rho\sigma\rho\sigma\sigma\rho\sigma\rho)\tau(\rho)\tau(\sigma\rho\sigma\sigma\rho\sigma\rho\sigma\sigma\rho) \\
        &\quad\,\tau(\rho)\tau(\rho\sigma\sigma\rho\sigma\rho\sigma\sigma\rho)\tau(\rho\sigma\rho\sigma\rho\sigma\sigma)\tau(\sigma\sigma\rho\sigma\rho\sigma\sigma\rho) \\
        &\quad\,\tau(\sigma\rho\sigma\rho\sigma\sigma)\tau(\sigma\rho\sigma\sigma\rho\sigma\rho\sigma\sigma\rho)\tau(\sigma\rho\sigma\rho\sigma\sigma\rho\sigma)\tau(\sigma\rho \\
        &\quad\,\sigma\rho\sigma)\tau(\rho\sigma\sigma\rho\sigma\rho\sigma\sigma\rho\sigma\rho)\tau(\rho\sigma\sigma\rho\sigma\rho\sigma\sigma\rho\sigma)\tau(\sigma\rho\sigma)
\end{align*}
where $\xi_i\in\<\gs,\rho\>$ correspond to the parenthesized terms following; this achieves precision in $d$ of
$\nicefrac{1.28}{10^{10}}$. The $\tau$-count is 
19; compare this with the predicted $\log_{59}(\nicefrac{1}{\ve^3})\approx16.9$. We remark that the discrepancy can be attributed to computational limitations: at key steps in the algorithm, we must run the Tonelli--Shanks algorithm for finding quadratic residues modulo some prime $q$, which has worst-case behavior $\Theta(q)$. Therefore it is only practical to abandon on instances where some prime factor is at least $10^6$, something that occurred more than 328 times before the above-stated approximation was found. 

For the second example of $H$, the central element is determined to be the following: 
\begin{align*}
    \gamma &= \xi_0\tau\xi_1\tau\xi_2\tau\xi_3\tau\xi_4\tau\xi_5\tau\xi_6\tau\xi_7\tau\xi_8\tau\xi_9 \\
        &=(\rho\sigma\sigma\rho\sigma\rho\sigma\sigma\rho\sigma\rho)\tau(\rho\sigma\rho\sigma\rho\sigma\sigma\rho\sigma\sigma)\tau(\sigma\rho\sigma\rho\sigma\sigma)\tau \\
        &\quad\,(\rho\sigma\sigma\rho\sigma\rho\sigma\sigma\rho)\tau(\rho\sigma\sigma\rho)\tau(\sigma\rho\sigma\sigma\rho\sigma\rho)\tau(\sigma\rho\sigma\rho\sigma\sigma\rho \\
        &\quad\,\sigma\sigma)\tau(\rho\sigma\rho\sigma\sigma\rho\sigma\rho\sigma\sigma\rho\sigma)\tau(\rho\sigma\sigma\rho\sigma\rho\sigma)\tau(\rho\sigma\sigma\rho\sigma\rho)
\end{align*}
($\xi_i$ serves the same function as above). It has $\tau$-count 9; compare this with the predicted $\frac{1}{3}\log_{59}(\nicefrac{1}{\ve^3})\approx5.6$. As before, the discrepancy can be attributed to non-vanishing of $o(1)$ factors for explicit $\ve$ and to having to abandon possibilities with very large prime factors. 
The outer diagonal elements are determined to be the following: 
\begin{align*}
    \gamma_1 &\approx \tsup[1]{\xi}_0\tau\tsup[1]{\xi}_1\tau\tsup[1]{\xi}_2\tau\tsup[1]{\xi}_3\tau\tsup[1]{\xi}_4\tau\tsup[1]{\xi}_5\tau\tsup[1]{\xi}_6\tau\tsup[1]{\xi}_7\tau\tsup[1]{\xi}_8\tau\tsup[1]{\xi}_9\tau\tsup[1]{\xi}_{10}\tau\tsup[1]{\xi}_{11}\tau\tsup[1]{\xi}_{12}\tau\tsup[1]{\xi}_{13}\tau\tsup[1]{\xi}_{14}\tau\tsup[1]{\xi}_{15}\tau\tsup[1]{\xi}_{16}\tau\tsup[1]{\xi}_{17}\tau\tsup[1]{\xi}_{18} \\
        &=(\sigma\rho\sigma\rho)\tau(\rho\sigma\rho\sigma\sigma\rho\sigma\rho\sigma\sigma\rho\sigma)\tau(\rho\sigma\sigma\rho\sigma\sigma)\tau(\rho\sigma\sigma\rho\sigma \\
        &\quad\,\rho\sigma\sigma\rho\sigma\rho)\tau(\rho\sigma\rho\sigma\rho\sigma)\tau(\sigma\rho\sigma\rho)\tau(\sigma\sigma\rho\sigma)\tau(\sigma\sigma\rho\sigma\rho\sigma) \\
        &\quad\,\tau(\sigma\rho\sigma\rho\sigma\sigma\rho\sigma)\tau(\sigma\rho\sigma\rho\sigma\sigma\rho\sigma\rho)\tau(\rho\sigma\sigma\rho\sigma\rho\sigma)\tau(\rho\sigma \\
        &\quad\,\rho\sigma\sigma\rho)\tau(\sigma\sigma\rho\sigma)\tau(\sigma\sigma\rho\sigma\rho\sigma\sigma)\tau(\sigma\sigma)\tau(\sigma\rho\sigma\rho\sigma\sigma\rho\sigma\sigma) \\
        &\quad\,\tau(\sigma\rho\sigma\rho\sigma\sigma\rho\sigma)\tau(\sigma\sigma\rho\sigma\rho\sigma\sigma\rho\sigma\rho)\tau(\rho\sigma\rho\sigma\sigma\rho\sigma\rho\sigma) \\
    \gamma_2 &\approx \hat{\xi}_0\tau\hat{\xi}_1\tau\hat{\xi}_2\tau\hat{\xi}_3\tau\hat{\xi}_4\tau\hat{\xi}_5\tau\hat{\xi}_6\tau\hat{\xi}_7\tau\hat{\xi}_8\tau\hat{\xi}_9\tau\hat{\xi}_{10}\tau\hat{\xi}_{11}\tau\hat{\xi}_{12}\tau\hat{\xi}_{13}\tau\hat{\xi}_{14}\tau\hat{\xi}_{15}\tau\hat{\xi}_{16}\tau\hat{\xi}_{17}\tau\hat{\xi}_{18} \\
        &=(\sigma\rho\sigma\rho\sigma\sigma\rho)\tau(\sigma\sigma\rho\sigma\rho\sigma\sigma\rho)\tau(\sigma\rho\sigma\rho\sigma\sigma\rho\sigma\sigma)\tau(\sigma\rho\sigma \\
        &\quad\,\sigma\rho\sigma\rho)\tau(\sigma\sigma\rho\sigma\rho\sigma)\tau(\rho\sigma\rho\sigma\sigma)\tau(\sigma\rho\sigma\sigma\rho\sigma\rho)\tau(\rho\sigma\rho\sigma \\
        &\quad\,\rho\sigma\sigma\rho\sigma\rho)\tau(\rho\sigma\rho\sigma\sigma\rho\sigma\sigma)\tau(\rho\sigma\sigma\rho\sigma)\tau(\rho\sigma\rho\sigma\rho\sigma\sigma\rho \\
        &\quad\,\sigma\rho)\tau(\rho\sigma\sigma\rho)\tau(\rho\sigma\sigma\rho\sigma\rho\sigma\sigma\rho)\tau(\rho\sigma\sigma\rho\sigma)\tau(\rho\sigma\rho\sigma\sigma\rho \\
        &\quad\,\sigma\rho\sigma\sigma)\tau(\rho\sigma\rho\sigma\sigma\rho\sigma\rho\sigma\sigma)\tau(\sigma\sigma\rho\sigma\rho\sigma\sigma\rho\sigma)\tau(\sigma\rho)\tau(\rho)
\end{align*}
($\tsup[1]{\xi}_i$, $\hat{\xi}_i$ serves the same function as above). They both have $\tau$-counts of 18, with 75 collective abandoned cases; compare this with the predicted $\log_{59}(\nicefrac{1}{\ve^3})\approx16.9$. Multiplying out $\g_1\g\g_2$ gives distance in $d$ of $\nicefrac{1.28}{10^{10}}$ (that this difference equals the previous one is a coincidence; they disagree at the third decimal place). 

\section{Concluding Remarks}
\label{sec:Conclusion}
This work represents a theoretical, heuristic, and proof-of-concept demonstration of a state-of-the-art methodology to construct single-qubit quantum gates, optimizing for using as few expensive gates as possible, and in particular representing an improvement of $\log_259\approx5.9$ over the previous best method \cite{Stier}. However, the area of efficient quantum hardware selection is far from fully explored. While \cite{ParzanchevskiSarnak} demonstrates that efficiently computing a length-$\log_p(\nicefrac{1}{\eps^3})$ factorization is NP-complete (for an arbitrary $\PU(2)$-element into golden gates associated to prime $p$), it may still be possible to achieve length-$c\log_p(\nicefrac{1}{\eps^3})$ factorizations for $c\in(1,\nicefrac{7}{3})$. Another possibility is in the study of multiple qubits simultaneously, as has been initiated for $\PU(3)$ by Evra and Parzanchevski \cite{EvraParzanchevski}. 

\section{Acknowledgements}
\label{sec:Ack}
TRB was supported, in part, by the Institute for Advanced Study and Medgar Evers College. ZS was supported by a National Science Foundation graduate research fellowship, grant numbers DGE-1752814/2146752.
We are deeply indebted to Peter Sarnak for introducing us to this to this problem and for the many conversations that led to production of this paper. We are also grateful to Carlos Esparza and John Voight for their suggestions. 

\bibliography{bib}{}
\bibliographystyle{plain}

\appendix

\section{$\Q(i,\vf)$ is norm-Euclidean}
\label{sec:norm-Euclidean}
Put $R=\Z[i,\vf]$ and $K=\Q(i,\vf)$. 
\begin{proposition}
    $R$ is the ring of integers of $K$. 
\end{proposition}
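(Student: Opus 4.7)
The plan is to identify $K$ as the compositum of the two quadratic fields $\Q(i)$ and $\Q(\vf)$ and invoke the classical result that when two number fields are linearly disjoint with coprime discriminants, the ring of integers of their compositum is the compositum of their rings of integers.

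First I would verify the basic structure: $\Q(i)\cap\Q(\vf)=\Q$, since these are distinct quadratic subfields of $K$, and hence $[K:\Q]=[\Q(i):\Q]\cdot[\Q(\vf):\Q]=4$, so $\{1,i,\vf,i\vf\}$ is a $\Q$-basis of $K$ and a $\Z$-basis of $R=\Z[i,\vf]$ (as a free $\Z$-module of rank $4$). Next I would recall that $\mathcal{O}_{\Q(i)}=\Z[i]$ has discriminant $-4$, and $\mathcal{O}_{\Q(\vf)}=\Z[\vf]$ has discriminant $5$ (coming from the minimal polynomial $x^2-x-1$), and observe $\gcd(-4,5)=1$.

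Then the standard compositum theorem (see e.g.\ Fr\"ohlich--Taylor or Marcus, \emph{Number Fields}, Ch.\ 2) applies: for number fields $L_1,L_2$ with $L_1\cap L_2=\Q$ and coprime discriminants, $\mathcal{O}_{L_1L_2}=\mathcal{O}_{L_1}\mathcal{O}_{L_2}$. Applied here this gives $\mathcal{O}_K=\Z[i]\cdot\Z[\vf]=\Z[i,\vf]=R$, as desired.

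The main (essentially only) obstacle is deciding whether to cite the compositum theorem as a black box or to reprove it in this special case. A self-contained proof would proceed by writing a hypothetical integer as $\alpha=a+b\vf+ci+di\vf$ with $a,b,c,d\in\Q$, and applying the four embeddings of $K$ into $\C$ (sending $i\mapsto\pm i$, $\vf\mapsto\vf$ or $1-\vf$) to take traces: $\Tr_{K/\Q(i)}(\alpha)$ and $\Tr_{K/\Q(\vf)}(\alpha)$ must lie in $\Z[i]$ and $\Z[\vf]$ respectively, and combining these with $\Tr_{K/\Q}(\alpha)\in\Z$ together with minimal-polynomial denominator considerations (using that $2$ and $5$ are coprime in $\Z$) pins down $a,b,c,d\in\Z$. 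Given that the compositum theorem handles this cleanly and is well-established, citing it is the cleanest route.
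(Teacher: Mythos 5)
Your proof is correct, but it takes a genuinely different route from the paper's. The paper proceeds by direct computation: it takes the $\Z$-basis $\{1,\vf,i,i\vf\}$ of $R$, computes the discriminant of that $\Z$-module to be $400$, and then observes that $400$ is also the discriminant of $K$ as recorded in the LMFDB; since $R\subseteq\cO_K$ with equal discriminants, the index is $1$. You instead recognize $K$ as the compositum of the two quadratic fields $\Q(i)$ and $\Q(\vf)$, whose discriminants $-4$ and $5$ are coprime, and invoke the standard compositum theorem (Marcus, \emph{Number Fields}, Ch.\ 2, Thm.\ 12) that $\cO_{L_1L_2}=\cO_{L_1}\cO_{L_2}$ under these hypotheses. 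Both are valid. The paper's route is shorter to state but outsources the key fact (the field discriminant of $K$) to a database lookup; yours is self-contained modulo a well-known theorem and moreover explains \emph{why} the discriminant factors as $(-4)^2\cdot 5^2=400$, which the paper's calculation merely exhibits. Your observation that the linear-disjointness hypothesis is automatic once the discriminants are coprime (since any common subfield larger than $\Q$ would force a common ramified prime) could be added to tighten the argument, but as stated it is already complete.
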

\begin{proof}
    Consider the canonical $\Z$-basis of $R$, namely $\{1,\vf,i,i\vf\}$. One readily computes its discriminant to be 400, equal to $K$'s, as per \cite{lmfdb}. 
\end{proof}

Consider the norm function $N=\abs{N_{K/\Q}}$, defined on $K$ and taking integral values on $R$. (The absolute value here is customary but superfluous, as one of $K$'s $\Q$-automorphisms is complex conjugation.)

In what follows, balls $\cB(r)$ of radius $r$ are centered at the origin and closed and with respect to the $\ell_\infty$-norm. 

We shall treat $K$ interchangably with its formulation as the $\Q$-space $\Q^4$. 

\begin{theorem}\label{thm:norm-euclidean}
    $R$ is norm-Euclidean; that is, $R$ is a Euclidean domain with respect to the Euclidean function $N$. 
\end{theorem}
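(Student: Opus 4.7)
The plan is to apply AM--GM to reduce the problem to a covering-radius computation for $R$ under the $T_2$-trace form on $K$. To show $R$ is norm-Euclidean, it suffices to show that for every $x \in K$ there exists $q \in R$ with $N(x-q) < 1$; indeed, for $a, b \in R$ with $b \neq 0$, applying this to $x = a/b$ gives $N(a - bq) < N(b)$. Label the four complex embeddings of $K$ as $\sigma_1, \bar\sigma_1, \sigma_2, \bar\sigma_2$, where $\sigma_1$ fixes $\vf$ and $\sigma_2$ sends $\vf \mapsto 1-\vf$. For any $y \in K$, AM--GM yields
\[
N(y) \;=\; |\sigma_1(y)|^2\,|\sigma_2(y)|^2 \;\leq\; \left(\frac{|\sigma_1(y)|^2+|\sigma_2(y)|^2}{2}\right)^{\!2} \;=\; \frac{|y|_{T_2}^4}{16},
\]
where $|y|_{T_2}^2 := \sum_\sigma |\sigma(y)|^2 = 2(|\sigma_1(y)|^2+|\sigma_2(y)|^2)$. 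Hence it suffices to bound the $T_2$-covering radius of $R$ by strictly less than $2$.

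The key observation is that $R$ decomposes $T_2$-orthogonally. In the basis $\{1,\vf,i,i\vf\}$ the Gram matrix of $T_2$ is the block diagonal
\[
\begin{pmatrix} 4 & 2 & 0 & 0 \\ 2 & 6 & 0 & 0 \\ 0 & 0 & 4 & 2 \\ 0 & 0 & 2 & 6 \end{pmatrix},
\]
with the cross terms between $\Z[\vf]$ (real in every embedding) and $i\Z[\vf]$ (purely imaginary) vanishing. Thus $R = L \oplus iL$ as a $T_2$-orthogonal direct sum, where $L = \Z[\vf]$ carries the $2\times 2$ Gram matrix $\bigl(\begin{smallmatrix}4&2\\2&6\end{smallmatrix}\bigr)$. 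The shortest vectors of $L$ are $\pm 1$ of squared $T_2$-norm $4$ and $\pm\vf, \pm(1-\vf)$ of squared $T_2$-norm $6$, so the Voronoi cell at the origin is a hexagon. Solving the $2\times 2$ linear system for the Voronoi vertex equidistant from $0$, $1$, $\vf$ gives the explicit point $\tfrac{3}{10}+\tfrac{2}{5}\vf$, whose squared $T_2$-norm is
\[
4\cdot\tfrac{9}{100} + 4\cdot\tfrac{6}{50} + 6\cdot\tfrac{4}{25} = \tfrac{9}{5},
\]
and by the hexagonal symmetry all six Voronoi vertices lie at this same distance, so $\rho(L)^2 = 9/5$.

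Combining these gives $\rho(R)^2 = 2\rho(L)^2 = 18/5 < 4$, so for every $x \in K$ one finds $q \in R$ with $|x-q|_{T_2}^2 \leq 18/5$, and hence $N(x-q) \leq (18/5)^2/16 = 81/100 < 1$, as required. The main obstacle, modest though it is, is verifying that the candidate Voronoi vertex genuinely lies in the Voronoi cell of $0$, i.e., that no lattice point outside $\{0, 1, \vf\}$ is closer; this amounts to checking the next-shortest vectors such as $\pm(1+\vf)$ and $\pm(2-\vf)$ (both of squared $T_2$-norm $14$), and follows from a direct computation in planar lattice geometry.
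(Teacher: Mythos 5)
Your proof is correct and takes a genuinely different — and considerably cleaner — route than the paper's. The paper brute-forces the statement: it writes $N$ out as an explicit degree-four polynomial in the coefficients of $\alpha = w + x\vf + yi + zi\vf$, derives an explicit upper bound for $N(\alpha+\beta)$ in terms of $N(\alpha)$ and $\norm{\beta}_\infty$, then covers the fundamental cube $[-\nicefrac12,\nicefrac12]^4$ with a fine grid and verifies the bound at finitely many points via a Python script (with an ad-hoc fallback translation for grid points near the corners, where the naive bound exceeds 1). You instead bound $N$ by the fourth power of the $T_2$-length via AM--GM, reducing the whole question to a covering-radius estimate, and then exploit the CM structure of $K$: the real subfield $\Q(\vf)$ and its purely-imaginary complement $i\Q(\vf)$ are $T_2$-orthogonal, so $R$ splits as $L \perp iL$ with $L = \Z[\vf]$ of Gram matrix $\bigl(\begin{smallmatrix}4&2\\2&6\end{smallmatrix}\bigr)$, and $\rho(R)^2 = 2\rho(L)^2$. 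The 2-dimensional covering radius is then an elementary Voronoi-cell computation. Your approach is shorter, needs no computer, and yields the sharp constant $\nicefrac{81}{100}$, whereas the paper only certifies the threshold $<1$. On the other hand the paper's method is "dumber" and in principle portable to number fields without a convenient orthogonal splitting (though, as they remark, it failed for the other $\Q(i,\sqrt{n})$).

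One small gap: "by the hexagonal symmetry all six Voronoi vertices lie at this same distance" is not a valid inference — a generic centrally symmetric hexagonal Voronoi cell has three antipodal vertex pairs at three \emph{different} distances from the origin, and the visible isometry group of $L$ (sign change and Galois conjugation $\vf \leftrightarrow 1-\vf$) only identifies two of the three pairs. You should simply solve the other two $2\times2$ linear systems; you'll find the circumcenters of $\{0,\vf,\vf-1\}$ and $\{0,1,1-\vf\}$ at $-\tfrac{3}{10}+\tfrac{3}{5}\vf$ and $\tfrac{7}{10}-\tfrac{2}{5}\vf$ respectively, both with squared $T_2$-norm $\tfrac{9}{5}$, confirming $\rho(L)^2 = \tfrac{9}{5}$. (Even without equality, any bound $\rho(L)^2 < 2$ would suffice, so the argument has slack.) You should likewise state explicitly that the three candidate Voronoi vertices are genuinely in the Voronoi cell of $0$, i.e.\ that no further lattice point is closer — your closing sentence gestures at this but a one-line check against $\pm(1+\vf)$, $\pm(2\vf-1)$, etc.\ would close it cleanly.
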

\begin{proof}
    We use the standard reformulation that $R$ is norm-Euclidean if and only if for all $\alpha\in K$ there is $\gb\in R$ for which $N(\alpha-\gb)<1$. Therefore we shall attack the latter statement. 
    
    Put $\alpha=w+x\vf+yi+zi\vf\in K$. Then, we readily compute
    \begin{align*}
        N(\alpha) &= w^4 + 2 w^3 x - w^2 x^2 - 2 w x^3 + x^4 + 2 w^2 y^2 + 2 w x y^2 + 3 x^2 y^2 + y^4 + 2 w^2 y z \\
            &\qquad - 8 w x y z - 2 x^2 y z + 2 y^3 z + 3 w^2 z^2 - 2 w x z^2 + 2 x^2 z^2 - y^2 z^2 - 2 y z^3 + z^4.
    \end{align*}
    Define $\norm{\alpha}=\max\{\abs{w},\abs{x},\abs{y},\abs{z}\}$. Then we can also compute, for fixed $\alpha$ and any other $\gb\in K$ with $\norm{\gb}\to0$, 
    \begin{equation}
        N(\alpha+\gb)\le N(\alpha)+O(\norm{\gb}),\label{eq:small perturbation}
    \end{equation}
    and we shall presently obtain an effective, non-asymptotic form of this fact. 
    
    Viewing $K\cong\Q^4$ as $\Q$-spaces and $R$ as the standard lattice, we can translate any element of $K$ using $R$ to one with $\ell_\infty$-norm at most $\nicefrac{1}{2}$ (that is, $\cB(\nicefrac{1}{2})$), by subtracting off the ``rounded'' element---round each component to the nearest integer. So, it remains to verify whether every element $\alpha\in\cB(\nicefrac{1}{2})$ of the vector space has $N(\alpha)<1$ (which turns out to not actually hold!). To attempt to accomplish this, we look at a refinement of $R$. In particular, consider $\Lambda=\frac{1}{n}\Z^4$ for well-chosen $n$. We cover $\cB(\nicefrac{1}{2})$ with $\Lambda$-translates of $\cB(\nicefrac{1}{2n})$, so that for each $\alpha\in\gL$, for all $\beta\in\alpha+\cB(\nicefrac{1}{2n})$, by \eqref{eq:small perturbation} we have $N(\beta)\le N(\alpha)+O(\nicefrac{1}{2n})$. The balancing act, then, is to choose $n$ small enough so that $\#(\Lambda\cap\cB(\nicefrac{1}{2}))$ is manageable for a computer, but large enough so that $\nicefrac{1}{2n}$ is sufficiently small. There is a catch, where it is not actually the case that we can ensure that $N(\alpha)+O(\nicefrac{1}{2n})<1$ for all $\alpha\in\Lambda\cap\cB(\nicefrac{1}{2})$; however, for those $\alpha$ which violate this condition, we can attempt to circumvent the obstruction by translating to $(\alpha+\delta)+\cB(\nicefrac{1}{2n})$ (where $\delta\in\Z^4$) and then taking $N$.\footnote{As it turns out, picking $\delta$ so that it shifts exactly one component towards 0 by exactly 1 is sufficient when any $\delta$ is necessary at all.}
    
    We now establish \eqref{eq:small perturbation}, after which we will be able to select $n$. Keeping $\alpha=w+x\vf+yi+zi\vf$, put $\beta=d_1+d_2\vf+d_3i+d_4\vf$. Fully written out, $N(\alpha+\beta)$ is (the absolute value of) a degree-four polynomial in eight variables with 170 total terms, so we spare the reader its full statement. Letting $\norm{\beta}\le\ve$,\footnote{So that $\abs{d_\ell}\le\ve$ for all $\ell\in[4]$.} however, we have that
    \begin{align*}
    N(\alpha+\beta) &\le N(\alpha) \\ 
        & \quad + 2\ve (\abs{2 w^3 + 3 w^2 x - w x^2 - x^3 + 2 w y^2 + x y^2 + 2 w y z + 3 w z^2 - x z^2 - 4 x y z} \\
            & \qquad + \abs{w^3 - w^2 x - 3 w x^2 + 2 x^3 + w y^2 + 3 x y^2 - 4 w y z - 2 x y z - w z^2 + 2 x z^2} \\
            & \qquad + \abs{2 w^2 y + 2 w x y + 3 x^2 y + 2 y^3 + w^2 z - 4 w x z - x^2 z + 3 y^2 z - y z^2 - z^3} \\
            & \qquad + \abs{w^2 y - 4 w x y - x^2 y + y^3 + 3 w^2 z - 2 w x z + 2 x^2 z - y^2 z - 3 y z^2 + 2 z^3}) \\
        & \quad + \ve^2 (\abs{6 w^2 + 6 w x - x^2 + 2 y^2 + 2 y z + 3 z^2} + 2\abs{3 w^2 - 2 w x - 3 x^2 + y^2 - 4 y z - z^2} \\
            & \qquad + \abs{ - w^2 - 6 w x + 6 x^2 + 3 y^2 - 2 y z + 2 z^2} + \abs{2 w^2 + 2 w x + 3 x^2 + 6 y^2 + 6 y z - z^2} \\
            & \qquad + 2\abs{w^2 - 4 w x - x^2 + 3 y^2 - 2 y z - 3 z^2} + \abs{3 w^2 - 2 w x + 2 x^2 - 6 y z + 6 z^2 - y^2} \\
            & \qquad + 4\abs{2 w y + x y + w z - 2 x z} + 4\abs{w y + 3 x y - 2 w z - x z} + 4\abs{w y - 2 x y + 3 w z - x z} \\
            & \qquad + 4\abs{ - 2 w y - x y - w z + 2 x z}) \\
        & \quad + 2\ve^3 (\abs{w + x} + 2\abs{3w - x} + 2\abs{w + 3x} + 2\abs{2x - w} + 3\abs{2w + x} + 2\abs{w - 2x} \\
            & \qquad + 2\abs{2z - y} + 2\abs{y + 3z} + 2\abs{y - 2z} + 2\abs{3y - z} + 4\abs{2y + z}) \\
        & \quad + 40 \ve^4.
    \end{align*}
    Then, picking $n=6$ and $\ve=\frac{1}{12}$, for each $\alpha\in\Lambda\cap\cB(\nicefrac{1}{2})$ as well as $\alpha-(\sgn w,0,0,0)$, $\alpha-(0,\sgn x,0,0)$, $\alpha-(0,0,\sgn y,0)$, and $\alpha-(0,0,0,\sgn z)$, we test whether the right-hand side of the above is bounded by 1 for any of those five choices. The code appearing in \figref{fig:norm code} verifies that this indeed comes to pass. 
\end{proof}
\begin{figure}
\centering
\begin{lstlisting}[language=Python]
import numpy as np
def KQnorm(w,x,y,z,r):
    p0 = abs(w**4 + 2 * w**3 * x - w**2 * x**2 - 2 * w * x**3 + x**4 \ 
        + 2 * w**2 * y**2 + 2 * w * x * y**2 + 3 * x**2 * y**2 \ 
        + y**4 + 2 * w**2 * y * z - 8 * w * x * y * z \ 
        - 2 * x**2 * y * z + 2 * y**3 * z + 3 * w**2 * z**2 \ 
        - y**2 * z**2 - 2 * y * z**3 + z**4 - 2 * w * x * z**2 \ 
        + 2 * x**2 * z**2)
    p1 = 2 * r * (abs(2 * w**3 + 3 * w**2 * x - w * x**2 - x**3 \ 
        + 2 * w * y**2 + x * y**2 + 2 * w * y * z + 3 * w * z**2 \ 
        - x * z**2 - 4 * x * y * z) + abs(w**3 - w**2 * x \ 
        - 3 * w * x**2 + 2 * x**3 + w * y**2 + 3 * x * y**2 \ 
        - 4 * w * y * z - 2 * x * y * z - w * z**2 + 2 * x * z**2) \ 
        + abs(2 * w**2 * y + 2 * w * x * y + 3 * x**2 * y + 2 * y**3 \ 
        + w**2 * z - 4 * w * x * z - x**2 * z + 3 * y**2 * z \ 
        - y * z**2 - z**3) + abs(w**2 * y - 4 * w * x * y - x**2 * y \ 
        + y**3 + 3 * w**2 * z - 2 * w * x * z + 2 * x**2 * z \ 
        - y**2 * z - 3 * y * z**2 + 2 * z**3))
    p2 = r**2 * (abs(6 * w**2 + 6 * w * x - x**2 + 2 * y**2 \ 
        + 2 * y * z + 3 * z**2) + abs(6 * w**2 - 4 * w * x \ 
        - 6 * x**2 + 2 * y**2 - 8 * y * z - 2 * z**2) \ 
        + abs( - w**2 - 6 * w * x + 6 * x**2 + 3 * y**2 - 2 * y * z \ 
        + 2 * z**2) + abs(2 * w**2 + 2 * w * x + 3 * x**2 + 6 * y**2 \ 
        + 6 * y * z - z**2) + abs(2 * w**2 - 8 * w * x - 2 * x**2 \ 
        + 6 * y**2 - 4 * y * z - 6 * z**2) + abs(3 * w**2 \ 
        - 2 * w * x + 2 * x**2 - 6 * y * z + 6 * z**2 - y**2) \ 
        + abs(8 * w * y + 4 * x * y + 4 * w * z - 8 * x * z) \ 
        + abs(4 * w * y + 12 * x * y - 8 * w * z - 4 * x * z) \ 
        + abs(4 * w * y - 8 * x * y + 12 * w * z - 4 * x * z) \ 
        + abs( - 8 * w * y - 4 * x * y - 4 * w * z + 8 * x * z)) 
    p3 = 2 * r**3 * (abs(w + x) + 2 * abs(3 * w - x) \ 
        + 2 * abs(w + 3 * x) + 2 * abs(2 * x - w) \ 
        + 3 * abs(2 * w + x) + 2 * abs(w - 2 * x) \ 
        + 2 * abs(2 * z - y) + 2 * abs(y + 3 * z) \ 
        + 2 * abs(y - 2 * z) + 2 * abs(3 * y - z) \ 
        + 4 * abs(2 * y + z)) 
    p4 = 40 * r**4
    return p0 + p1 + p2 + p3 + p4
n = 7
r = 1.0/(2*(n-1))
l = np.linspace(-0.5,0.5,n)
for a in l:
	for b in l:
		for c in l:
			for d in l:
				if KQnorm(a,b,c,d,r) >= 1 \ 
				    and KQnorm(a-np.sign(a),b,c,d,r) >= 1 \ 
				    and KQnorm(a,b-np.sign(b),c,d,r) >= 1 \ 
				    and KQnorm(a,b,c-np.sign(c),d,r) >= 1 \ 
				    and KQnorm(a,b,c,d-np.sign(d),r) >= 1:
					print (a,b,c,d)
\end{lstlisting}
\caption{Code used to prove \thmref{thm:norm-euclidean}. It prints 4-tuples corresponding to points whose norms are too large. Its failure to print anything completes the proof.}
\label{fig:norm code}
\end{figure}
\begin{remark}
It would appear that this method would readily adapt to a computation to determine that additional number fields are norm-Euclidean, so long as one knows a basis for its ring of integers and correspondingly computes an appropriate analogue to the local effective bound \texttt{KQnorm(w,x,y,z,r)}. Unfortunately we have been unable to reproduce this success with any other biquadratic number field of the form $\Q\lpr{i,\sqrt{n}}$, $n>5$. 
\end{remark}

\section{Irreducible elements and sums of two squares in $\Z[\vf]$}
\label{sec:Zphi appendix}
As in \cite[\S C]{RossSelinger}, here we shall summarize results about classifying irreducible elements in $\Z[\vf]$ and an efficient algorithm for writing certain elements of $\Z[\vf]$ as a sum of two squares. 

For this section, let $N=N_{\Q(\vf)/\Q}$ (in contrast to $N=N_{\Q(i,\vf)/\Q}$ as in \secref{sec:norm-Euclidean}). We readily compute $N(a+b\vf)=a^2+ab-b^2$. 

Recall that $\Z[\vf]$ is a Euclidean domain with respect to $\abs{N}$, and that $\Z[\vf]^\times=\<\pm\vf\>$ (cf. \cite[\S4.1.4]{ParzanchevskiSarnak}). Also recall that $\vf^\bullet$ is $\vf$'s Galois conjugate, which happens to equal $1-\vf$. Extend $^\bullet$ $\Q$-linearly to all of $\Q(\vf)$. 

\begin{proposition}\label{prop:primes in Z vf}
    Let $p$ be irreducible in $\Z$. If $p\equiv\pm2\pmod{5}$ then $p$ is irreducible in $\Z[\vf]$. If $p\equiv\pm1\pmod{5}$ then there is an algorithm (running in time $O(\poly\log p)$) to compute a $\Z[\vf]$-irreducible element dividing $p$. 
\end{proposition}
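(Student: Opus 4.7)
The plan is to treat the two cases separately, exploiting the fact that $\Z[\vf]$ is the ring of integers of the real quadratic field $\Q(\sqrt{5})$ and is itself Euclidean (with respect to $\abs{N}$). First I compute that $N(a+b\vf)=a^2+ab-b^2$, and in preparation for the residue analysis I would multiply by 4 to complete the square, obtaining the identity $4N(a+b\vf)=(2a+b)^2-5b^2$.

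For the first claim, suppose toward contradiction that some $p\equiv\pm2\pmod 5$ factors non-trivially as $p=\alpha\gb$ in $\Z[\vf]$. Taking norms gives $p^2=N(\alpha)N(\gb)$, and since neither factor is a unit (units have norm $\pm1$ as $\Z[\vf]^\times=\<\pm\vf\>$) we must have $\abs{N(\alpha)}=p$. Writing $\alpha=a+b\vf$ and reducing the identity above modulo 5, this forces $(2a+b)^2\equiv\pm 4p\pmod 5$. I would then tabulate the squares modulo 5, namely $\{0,1,4\}$, and observe that when $p\equiv\pm 2\pmod 5$ neither $4p\pmod 5$ nor $-4p\pmod 5$ lies in this set, yielding the required contradiction.

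For the second claim, the strategy is to explicitly exhibit a prime lying above $p$ in $\Z[\vf]$. By quadratic reciprocity, $p\equiv\pm1\pmod5$ is equivalent to $5$ being a quadratic residue modulo $p$. I would invoke the Tonelli--Shanks algorithm to compute some $s\in\Z$ with $s^2\equiv 5\pmod p$ in probabilistic time $O(\poly\log p)$. The minimal polynomial $x^2-x-1$ of $\vf$ then factors as $(x-t)(x-(1-t))\pmod p$, where $t\equiv\frac{1+s}{2}\pmod p$ (the inversion of $2$ is valid since $p$ is odd when $p\equiv\pm 1\pmod 5$). Consequently $\vf-t$ lies in a prime $\fq$ of $\Z[\vf]$ lying over $p$, so $\gcd(p,\vf-t)$ (computed in $\Z[\vf]$ via the Euclidean algorithm with respect to $\abs{N}$) is a $\Z[\vf]$-irreducible divisor of $p$.

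The main subtlety, and where I expect most of the work to lie, is the runtime accounting. Tonelli--Shanks is polynomial in $\log p$ in expectation (unconditionally so under GRH in its deterministic variants), and each Euclidean reduction step in $\Z[\vf]$ requires only an integer rounding plus a bounded number of arithmetic operations on elements whose norms strictly decrease at each step; starting from $N(p)=p^2$, this bounds the number of iterations and the bit-size of intermediate results polynomially in $\log p$. Combining these two facts yields the claimed $O(\poly\log p)$ total runtime.
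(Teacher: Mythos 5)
Your proposal is correct and follows essentially the same path as the paper's proof: for $p\equiv\pm2\pmod5$ both arguments reduce the norm form modulo $5$ (you complete the square to $(2a+b)^2-5b^2$, the paper notes $a^2+ab-b^2\equiv(a-2b)^2\pmod5$), and for $p\equiv\pm1\pmod5$ both extract a mod-$p$ root of $\vf$'s minimal polynomial $x^2-x-1$ and then take $\gcd(p,\vf-t)$ in $\Z[\vf]$. One small gap: you assert that $\gcd(p,\vf-t)$ is a $\Z[\vf]$-irreducible divisor of $p$ without checking it is a non-unit — the paper closes this with a short argument via the conjugate $\gcd(p,(\vf-t)^\bullet)$ — though on the other hand you are slightly more careful than the paper in flagging that the square-root extraction (Tonelli--Shanks) is the step needing a randomness or GRH caveat.
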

\begin{proof}
    If $p$ is reducible in $\Z[\vf]$ then there exist non-units $u,v\in\Z[\vf]$ with $uv=p$ and accordingly $N(u)\mid N(p)=p^2$; that is, $N(u)=p$. 
    \begin{itemize}
    \item Observe that $a^2+ab-b^2\equiv(a-2b)^2\pmod{5}$ and that neither of $\pm2$ are quadratic residues modulo 5. This proves the first part of the proposition, as if $u=a+b\vf$ then $N(u)$ can never equal $p$. 
    
    \item To prove the second part, we first show that if $p\equiv1\pmod{5}$ then there exists $x\in\Z$ satisfying $x^2-x-1\equiv0\pmod{p}$. Indeed, rewrite the equation to $(x-2\inv)^2\equiv1+4\inv$. Now, 
    $$\leg{1+4\inv}{p}=\leg{1+4\inv}{p}\leg{4}{p}=\leg{5}{p}=\leg{p}{5}(-1)^{p-1}=\leg{p}{5}=1$$
    with the third equality following by quadratic reciprocity. Let $y\in\Z$ be any modulo-$p$ square root of $1+4\inv$. Then $x=y+2\inv$ suffices. Set $z=x-\vf\in\Z[\vf]$. Then $p\mid x^2-x-1=zz^\bullet$ but $p\nmid z$ as $z$'s $\vf$-part is not divisible by $p$, so $p$ is not irreducible. Thus $u=\gcd(p,z)$ will be a non-unit divisor of $p$, as desired. ($u$ cannot be a unit because otherwise $\gcd(p^\bullet,z^\bullet)=\gcd(p,z^\bullet)$ will also be a unit, but then there are no prime divisors of $zz^\bullet$ also dividing $p$, a contradiction.)
    
    The timing of the above method is due to the extended Euclidean algorithm for modular inverses, and the standard Euclidean algorithm for GCDs in a Euclidean domain, which have respective runtimes $O(\log^2p)$ and $O(\log p)$. 
    \end{itemize}
\end{proof}
The only case not covered in the above is $p=5$, which trivially factors as $(-1+2\vf)^2$. 
\begin{algorithm}\label{alg:factor quickly}
    Assume that there is an efficient blackbox algorithm to factor $n\in\Z$ in time $O(\poly\log n)$. Then there is an algorithm which factors $n\in\Z[\vf]$ in time $O(\poly\log\abs{N(n)})$. 
\end{algorithm}
\begin{proof}
    Factor $N(n)$ as $\prod\limits_{\ell=1}^mp_\ell^{e_\ell}$. Each prime $p_\ell$ is factorizable in time $O(\poly\log p_\ell)\subset O(\poly\log N(n))$, by \propref{prop:primes in Z vf}. There are $O(\log N(n))$ such primes. 
\end{proof}
To each irreducible $u\in\Z[\vf]$, let its {\em associated prime} be the least (positive) prime factor of $N(u)$; that is, $\sqrt{N(u)}$ when $u$ is a $\Z$-irreducible times a unit, and $N(u)$ otherwise.\footnote{Let us quickly establish why the associated prime is well-defined. Suppose $p\neq q$ are $\Z$-primes with $p,q\mid N(u)=uu^\bullet$. Let $p',q'\in\Z[\vf]$ be irreducibles such that $\abs{N(p')}=p$ if $p\equiv\pm1\pmod{5}$ or $p=5$, and $p'=p$ otherwise; and define  $q'$ similarly. Then $p',q'\mid uu^\bullet$, so as irreducibles we have $p'$ dividing either $u$ or $u^\bullet$, and similarly for $q'$, hence $p=q$ since $u$ and $u^\bullet$ are themselves irreducible. Thus, $N(u)$ is divisible by at most one prime at most twice.}
\begin{lemma}\label{lem:not two SOTS}
    For any $u\in\Z[\vf]\backslash\{0\}$, it is not the case that both $u$ and $u\vf$ can be written as a sum of two squares. 
\end{lemma}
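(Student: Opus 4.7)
The plan is to exploit the two real embeddings of $\Q(\vf)$ into $\R$, namely the identity and the Galois conjugation $^\bullet$ sending $\vf$ to $\vf^\bullet=\tfrac{1-\sqrt{5}}{2}<0$. I will show that being a sum of two squares forces \emph{total non-negativity} under both embeddings, while multiplication by $\vf$ destroys this property for nonzero elements.

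First I would establish the following key observation: if $v=a^2+b^2$ for some $a,b\in\Z[\vf]$, then $v$ is totally non-negative, meaning both $v\geq0$ and $v^\bullet\geq0$ as real numbers. Indeed, since $\Z[\vf]\subset\R$, we have $v=a^2+b^2\geq0$ directly; applying the Galois automorphism gives $v^\bullet=(a^\bullet)^2+(b^\bullet)^2$, and again $a^\bullet,b^\bullet\in\R$ makes this non-negative. Thus a SOTS in $\Z[\vf]$ lies in the totally non-negative cone.

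Next I would upgrade this to \emph{total positivity} for nonzero SOTS. If $u\neq0$ is a SOTS with, say, $u^\bullet=0$, then $N(u)=uu^\bullet=0$, forcing $u=0$, contradiction. Hence both $u>0$ and $u^\bullet>0$ strictly.

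Finally, suppose for contradiction that $u\neq0$ and both $u$ and $u\vf$ are SOTS. By the previous step applied to $u\vf$, we would need $(u\vf)^\bullet=u^\bullet\vf^\bullet\geq0$. But $u^\bullet>0$ by total positivity of the nonzero SOTS $u$, while $\vf^\bullet=\tfrac{1-\sqrt{5}}{2}<0$, so $u^\bullet\vf^\bullet<0$, contradicting total non-negativity of $u\vf$. This completes the argument. There is essentially no technical obstacle here; the only thing one must check is that the two embeddings are honestly real (so that squares are automatically non-negative in each embedding), which is precisely the content of $\Q(\vf)$ being a totally real field.
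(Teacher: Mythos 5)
Your proof is correct, and it takes a genuinely different route from the paper's. You observe that a sum of two squares in $\Z[\vf]$ is \emph{totally non-negative} (non-negative under both real embeddings, since squares are real-non-negative and the Galois automorphism $^\bullet$ is another real embedding), upgrade this to total positivity for nonzero elements, and then note that $\vf^\bullet<0$ flips the sign at the conjugate place, ruling out both $u$ and $u\vf$ being sums of two squares. The paper instead applies the Brahmagupta--Fibonacci identity to the product $u^\bullet\cdot(u\vf)=N(u)\vf$, writing it as $(a+b\vf)^2+(c+d\vf)^2$ for integers $a,b,c,d$, expands in the $\Z$-basis $\{1,\vf\}$, and observes that the $1$-component of the right-hand side is $a^2+b^2+c^2+d^2$ while that of $N(u)\vf$ is $0$, forcing all four integers (and hence $N(u)$, and hence $u$) to vanish. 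Your sign-at-infinite-places argument is more conceptual and transfers directly to other real quadratic rings whose fundamental unit is not totally positive; the paper's argument is more computational but stays entirely inside the algebra of $\Z[\vf]$, which is in keeping with the explicit, coordinate-driven style of the surrounding appendix.
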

\begin{proof}
    Suppose not, so write $u=w^2+x^2$ and $u\vf=y^2+z^2$. Then we may take the product 
    $$u^\bullet u\vf=N(u)\vf=(w^\bullet y+x^\bullet z)^2+(w^\bullet z-x^\bullet y)^2=(a+b\vf)^2+(c+d\vf)^2$$
    for some $a,b,c,d\in\Z$. The right-hand side expands out with 1-part\footnote{Viewing $\Q(\vf)\cong\<1,\vf\>_\Q\cong\Q^2$ so that the ``1-part'' and ``$\vf$-part'' are the corresponding components in the canonical isomorphism.} equal to $a^2+b^2+c^2+d^2$, but $N(u)\vf$ has 1-part equal to 0, so $a=b=c=d=0$, and therefore $N(u)=0$, a contradiction. 
\end{proof}

\begin{proposition}\label{prop:irreds SOTS}
    Let $u\in\Z[\vf]$ be irreducible, with associated prime $p$. If $p\equiv1,3,7,9,13,17\pmod{20}$ then there is an efficient algorithm in time $O(\poly\log p)$ to write either $u$ or $u\vf$ as a sum of two squares. 
\end{proposition}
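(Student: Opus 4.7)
The plan is to emulate the classical Gaussian-integer algorithm for writing a prime $p\equiv1\pmod 4$ as a sum of two squares, lifted to the larger ring $R=\Z[i,\vf]$ which is norm-Euclidean by \thmref{thm:norm-euclidean}. The guiding idea is to find $s\in\Z[\vf]$ satisfying $s^2\equiv-1\pmod u$ and then compute $\gcd_R(u,s+i)$ to exhibit a nontrivial factorization of $u$ in $R$, whose two factors are $R$-conjugates yielding the desired sum of two squares.

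First, I would analyze the quotient $\Z[\vf]/(u)$ case by case on $p\pmod 5$. By \propref{prop:primes in Z vf}, this ring is $\F_p$ when $p\equiv\pm1\pmod 5$ (the split case) and $\F_{p^2}$ when $p\equiv\pm2\pmod 5$ (the inert case). In $\F_p$, $-1$ is a quadratic residue iff $p\equiv1\pmod 4$, which combined with the split congruence yields exactly $p\equiv1,9\pmod{20}$. In $\F_{p^2}$, $-1$ is always a quadratic residue (since $4\mid p^2-1$ for odd $p$), which combined with the inert congruence yields exactly $p\equiv3,7,13,17\pmod{20}$. These account for all six residue classes in the statement. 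In either regime, $s\in\Z[\vf]$ with $s^2\equiv-1\pmod u$ can be produced in $O(\poly\log p)$ time via Tonelli--Shanks (in the split case) or its $\F_{p^2}$ analogue, e.g., Cipolla's algorithm (in the inert case).

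Second, with such an $s$ in hand, observe that $u\mid(s+i)(s-i)$ in $R$ while $u\nmid s\pm i$ (since $i/u\notin R$), so $u$ factors properly in the PID $R$. Setting $v:=\gcd_R(u,s+i)$---computable in $O(\poly\log p)$ time by the standard Euclidean algorithm in norm-Euclidean $R$---yields a proper $R$-divisor. Writing $v=a+bi$ with $a,b\in\Z[\vf]$, one has $v\bar v=a^2+b^2\in\Z[\vf]$ (where $\bar{}$ denotes the $R$-conjugation $i\mapsto-i$), and $v\bar v$ is a $\Z[\vf]$-associate of $u$, so $v\bar v=\epsilon u$ for some $\epsilon\in\Z[\vf]^\times=\<\pm\vf\>$.

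Finally, identify $\epsilon$ and match it to the stated form. Because $\vf$ is a unit in $R$, replacing $v$ by $\vf^{\pm1}v\in R$ rescales $v\bar v$ by $\vf^{\pm2}$, so one may reduce $\epsilon$ to a coset representative in $\{\pm1,\pm\vf\}$ of $\Z[\vf]^\times/(\Z[\vf]^\times)^2$. Since $v\bar v=a^2+b^2$ is totally non-negative under both real embeddings of $\Q(\vf)$, the product $\epsilon u$ must be too. Assuming WLOG (after possibly swapping $u\mapsto-u$) that $u$ is positive under the standard embedding, the sign of $u$ under the other embedding is either $+$ (forcing $\epsilon=1$ and $u=a^2+b^2$) or $-$ (forcing $\epsilon=\vf$ and $u\vf=a^2+b^2$). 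In both cases one of $u,u\vf$ is realized as a sum of two squares, and Lemma~\ref{lem:not two SOTS} confirms that only one of them is. The main obstacle will be the unit bookkeeping in this last step---carefully tracking $\epsilon$ through the GCD and the $\vf^{\pm1}$-scaling reductions of $v$ to land exactly on one of $u,u\vf$ (rather than on a further associate like $\vf^2 u$). Beyond this, the polynomial-time complexity is immediate from composing the subroutines (Tonelli--Shanks or Cipolla, then Euclidean GCD in $R$), each of which is called only a bounded number of times.
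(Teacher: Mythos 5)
Your proof is correct, and it takes a genuinely different route from the paper's. The paper splits on $p\bmod 4$: for $p\equiv1\pmod4$ it finds an \emph{integer} $x$ with $x^2\equiv-1\pmod p$ and computes $\gcd_R(u,x+i)$; for $p\equiv3,7\pmod{20}$ (where $-1$ is not a square mod $p$) it instead observes that $\leg{-5}{p}=1$, finds an integer $x$ with $x^2\equiv-5\pmod p$, and computes $\gcd_R\bigl(u,\,x+i(-1+2\vf)\bigr)$, exploiting $\sqrt{-5}=i\sqrt5\in R$. You instead split on whether $p$ is split or inert in $\Z[\vf]$, so that $\Z[\vf]/(u)$ is $\F_p$ or $\F_{p^2}$, and in both regimes solve $s^2\equiv-1\pmod u$ directly in that residue field (Tonelli--Shanks or Cipolla), always computing $\gcd_R(u,s+i)$. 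This is more uniform (you never need the $\sqrt{-5}$ trick, and you never need a parity restriction on $x$ since your norm argument $N(q)\mid N(u)$ automatically rules out the ramified prime $1+i$), at the cost of doing square-root extraction in $\F_{p^2}$ for the inert primes $p\equiv13,17\pmod{20}$ where the paper gets away with an integer $x$; the residue classes match exactly ($\{1,9\}$ split $\cup$ $\{3,7,13,17\}$ inert $=\{1,3,7,9,13,17\}$). Your coprimality-and-norm argument for $v\bar v$ being a $\Z[\vf]$-associate of $u$ is the right one, though you state it a bit tersely: in the inert case you additionally need that if $N(v)<p^2$ then $u/(v\bar v)\in R\cap\Q(\vf)=\Z[\vf]$ would be a non-unit proper divisor of the $\Z[\vf]$-irreducible $u$, a contradiction. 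The unit bookkeeping at the end is handled at the same level of rigor as the paper itself: both you and the paper implicitly normalize the sign of $u$ (the paper asserts the unit factor is $\vf^m$ with no $-1$, relegating the issue to a footnote; your ``WLOG replace $u$ by $-u$'' changes the statement's conclusion from $u,u\vf$ to $-u,-u\vf$). Since the proposition is only invoked inside \thmref{thm:many SOTS}, where the irreducibles are produced algorithmically and may be taken totally positive, this shared looseness is harmless, but it is not a defect unique to your argument.
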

Observe that $\phi(20)=8$ ($\phi$ here signifying Euler's totient function), so by Chebotarev's density theorem the above-asserted algorithm manages to write $\nicefrac{3}{4}$ of primes as sums of two squares. 

\begin{proof}
    Before beginning in earnest, first remark that 2 factors in $\Z[i,\vf]$ as $(1+i)(1-i)$, and that $1\pm i$ are irreducible because $N_{\Q(i,\vf)/\Q(\vf)}(1\pm i)=2$, a $\Z[\vf]$-irreducible. 
    
    \begin{itemize}
    \item Suppose $p\equiv1\pmod{4}$ (i.e. $p\equiv1,9,13,17\pmod{20}$). Then there exists even $x\in\Z$ satisfying $x^2+1\equiv0\pmod{p}$, that is, $u\mid p\mid x^2+1$. Let $g=\gcd(u,x+i)$, computed in $\Z[i,\vf]$, and set $s=\re(g)$ and $t=\im(g)$. We claim that either $u$ or $u\vf$ equals a squared unit times $s^2+t^2$. Indeed, suppose $q\in\Z[i,\vf]$ is an irreducible divisor of $u$. We wish to show that $q$ divides exactly one of $x\pm i$. (Clearly it divides at least one of them, by $q\mid u\mid x^2+1$.)
    \begin{itemize}
    \item Say $1\pm i\neq q\in\Z[i,\vf]$ is an irreducible dividing $u$ and both $x\pm i$. Then $q\mid 2i$. By our choice of $q$ this is impossible. 
    
    \item Suppose now we let $q$ be one of $1\pm i$. Then in order to have $1\pm_1i\mid x\pm_2i$ (where $\pm_1$ and $\pm_2$ are independent signs) we must have $(1\pm_1i)(a+bi)=x\pm_2i$ where $a,b\in\Z[\vf]$. Solving for $a$ and $b$ by looking at real and imaginary parts, we have
    \begin{align*}
        a\mp_1 b&=\phantom{\pm_2}x\\
        b\pm_1 a&=\pm_21
    \end{align*}
    and solving yields $2a=x\pm_1\pm_21$, so in order for $a$ to exist we must have $x$ is odd. This is a contradiction to our assumption that $x$ is even, regardless of the choice of $\pm_1$ and $\pm_2$, thus in fact we have neither of $1\pm i$ dividing either of $x\pm i$. 
    \end{itemize}
    Thus, if $q$ divides $u$ with multiplicity $m_q$ (so that $u$ factors in $\Z[i,\vf]$ as $u=\vf^m\prod\limits_{q\mid u}q^{m_q}$), we must have that $q$ divides $x\pm i$ with multiplicity at least $m$ and $x\mp i$ with multiplicity 0; and so\footnote{Depending on choice of GCD algorithm, there may be a unit factor in front, but we assume not without loss of generality.} $g=\prod\limits_{q\mid u,x+i}q^{m_q}$ while $\bar{g}=\gcd(u,x-i)=\prod\limits_{q\mid u,x-i}q^{m_q}$. Thus $u=\vf^mg\bar{g}=\vf^m(\re(g)^2+\im(g)^2)$. If $2\mid m$ then $u=(\vf^{\nicefrac{m}{2}}\re(g))^2+(\vf^{\nicefrac{m}{2}}\im(g))^2$. 

    \item Suppose $p\equiv3\pmod{4},\pm2\pmod{5}$ (i.e. $p\equiv3,7\pmod{20}$). Then compute
    $$\leg{-5}{p}=\leg{-1}{p}\leg{5}{p}=-\leg{p}{5}(-1)^{p-1}=1$$
    so that there exists even $x\in\Z$ which is not a multiple of 5 satisfying $x^2+5\equiv0\pmod{p}$, that is, $u=p\mid x^2+5$. Let $g=\gcd(g,x+i(-1+2\vf))$, computed in $\Z[i,\vf]$, and set $s=\re(g)$ and $t=\im(g)$. We claim that $p=s^2+t^2$. Indeed, suppose $q\in\Z[i,\vf]$ is an irreducible divisor of $p$ (in $\Z[i,\vf]$). We wish to show that $q$ divides exactly one of $x\pm i(-1+2\vf)$. (Clearly it divides at least one of them, by $q\mid p\mid x^2+5$.)
    \begin{itemize}
        \item Say $-1+2\vf,1\pm i\neq q\in\Z[i,\vf]$ is an irreducible dividing $u$ and both $x\pm i(-1+2\vf)$. Then $q\mid2(-1+2\vf)i$. By our choice of $q$ this is impossible. 
        \item Suppose we let $q=-1+2\vf=\sqrt{5}$. Then as $5\nmid x$, we have 
        $$\frac{x\pm i(-1+2\vf)}{q}=\frac{x}{5}(-1+2\vf)\pm i\in\Q(i,\vf)\backslash\Z[i,\vf]$$
        and so in fact $q$ divides neither of $x\pm i(-1+2\vf)$. 
        \item Suppose now we let $q$ be one of $1\pm i$. Then in order to have $1\pm_1i\mid x\pm_2i(-1+2\vf)$ (where $\pm_1$ and $\pm_2$ are independent signs) we must have $(1\pm_1i)(a+bi)=x\pm_2i(-1+2\vf)$ where $a,b\in\Z[\vf]$. Solving for $a$ and $b$ by looking at real and imaginary parts, we have 
        \begin{align*}
            a\mp_1b&=\hspace{1.7cm}x\\
            b\pm_1a&=\pm_2(-1+2\vf)
        \end{align*}
        and solving yields $2a=x\pm_1\pm_2(-1+2\vf)$, so in order for $a$ to exist we must have $x$ is odd. This is a contradiction to our assumption that $x$ is even, regardless of the choice of $\pm_1$ and $\pm_2$, thus in fact we have neither of $1\pm i$ dividing either of $x\pm i$. 
    \end{itemize}
    \end{itemize}

    It is here that we require \thmref{thm:norm-euclidean}, that $\Z[i,\vf]$ is norm-Euclidean, as this ensures that GCDs are efficiently computable, using an Euclidean algorithm with respect to the Euclidean function (the norm). As before, the bottlenecks are in the extended and standard Euclidean algorithms, which are both of runtime $O(\log^2p)$. 
\end{proof}
Observe that we can also include the $\Z$-irreducibles $2=1^2+1^2$ and $5=(-1+2\vf)^2+0^2$. 

\begin{theorem}\label{thm:many SOTS}
    For $x\in\Z[\vf]$, factor it as 
    $$x=\prod\limits_{\ell=1}^nu_\ell^{m_\ell}$$
    where $u_\ell\in\Z[\vf]$ are all irreducible, with associated primes $p_\ell$, and $m_\ell\in\N$. Then either $x$ or $x\vf$ may be written as the sum of two squares if, for all $\ell\in[n]$, one of the following holds: 
    \begin{itemize}
        \item $p_\ell=2$; 
        \item $p_\ell\equiv1,3,7,9,13,17\pmod{20}$; 
        \item $2\mid m_\ell$; 
    \end{itemize}
    and, for arbitrary $x$ not a priori satisfying all three criteria, this whole procedure (i.e., either determining a sum of two squares, or rejecting on the basis of some $\ell$ failing all three conditions) runs in time $O(\poly\log\abs{N(x)})$. 
\end{theorem}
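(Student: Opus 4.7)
The plan is to factor $x$ into $\Z[\vf]$-irreducibles, apply Proposition \ref{prop:irreds SOTS} (or the observation $2=1^2+1^2$ when appropriate) to each factor, and combine the partial sum-of-two-squares representations multiplicatively using the Brahmagupta--Fibonacci identity $(a^2+b^2)(c^2+d^2)=(ac-bd)^2+(ad+bc)^2$. The subtlety is keeping careful track of the extra $\vf$-factors that accumulate, since an irreducible may need to be paired with $\vf$ before becoming a sum of two squares (per Lemma \ref{lem:not two SOTS}), yet the theorem only permits one such extra $\vf$ overall.

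First I would call Algorithm \ref{alg:factor quickly} to produce $x = \prod_\ell u_\ell^{m_\ell}$ in time $O(\poly\log\abs{N(x)})$, rejecting if any $p_\ell$ fails all three criteria. For each surviving factor I construct a sum-of-two-squares representation of some $u_\ell^{m_\ell}\vf^{s_\ell}$ with $s_\ell\in\{0,1\}$, in one of three ways: if $2\mid m_\ell$, set $s_\ell=0$ and write $u_\ell^{m_\ell}=(u_\ell^{m_\ell/2})^2+0^2$; if $p_\ell=2$, note that $2$ is $\Z[\vf]$-irreducible (Proposition \ref{prop:primes in Z vf}) and combine $2=1^2+1^2$ with itself $m_\ell$ times via Brahmagupta--Fibonacci (again $s_\ell=0$); and if $p_\ell\in\{1,3,7,9,13,17\}\pmod{20}$ and $m_\ell$ is odd, Proposition \ref{prop:irreds SOTS} supplies $\epsilon_\ell\in\{0,1\}$ with $u_\ell\vf^{\epsilon_\ell}=A^2+B^2$; raising to the $m_\ell$-th power yields $u_\ell^{m_\ell}\vf^{m_\ell\epsilon_\ell}$ as a sum of two squares, and dividing each square by $\vf^{\epsilon_\ell(m_\ell-1)/2}$ (permissible because $\vf$ is a unit and $m_\ell-1$ is even) leaves $u_\ell^{m_\ell}\vf^{\epsilon_\ell}$ as a sum of two squares with $s_\ell=\epsilon_\ell$.

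Combining all these representations via Brahmagupta--Fibonacci then yields a sum-of-two-squares representation of $\prod_\ell u_\ell^{m_\ell}\vf^{s_\ell} = x\vf^K$ where $K=\sum_\ell s_\ell$. If $K$ is even, dividing each square by $\vf^{K/2}$ recovers $x$ itself; if $K$ is odd, dividing by $\vf^{(K-1)/2}$ recovers $x\vf$. Either outcome matches the dichotomy in the theorem statement, completing the construction.

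For the runtime, there are $O(\log\abs{N(x)})$ irreducible factors, each handled in $O(\poly\log p_\ell) \subseteq O(\poly\log\abs{N(x)})$ by Proposition \ref{prop:irreds SOTS}, and the $O(\log\abs{N(x)})$ Brahmagupta--Fibonacci multiplications operate on numbers of polynomially bounded bit-length. The main obstacle I foresee is less deep mathematics than organizational bookkeeping: arranging for the global $\vf$-exponent to collapse cleanly to $\{0,1\}$. This resolves once one observes that $\vf$'s unit status allows division of any sum of two squares in $\Z[\vf]$ by $\vf^2$ while preserving the sum-of-two-squares property, which closes the loop.
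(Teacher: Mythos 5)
Your proposal takes essentially the same route as the paper: factor via Algorithm~\ref{alg:factor quickly}, obtain a sum-of-two-squares for each irreducible piece via Proposition~\ref{prop:irreds SOTS} (or directly for $2$ and even-multiplicity factors), combine with the Brahmagupta--Fibonacci identity, and then absorb the accumulated $\vf$-exponent by dividing each square by an even power of the unit $\vf$. If anything, you are a bit more explicit than the paper about tracking the multiplicities $m_\ell$ in the $p_\ell=2$ and $p_\ell\equiv1,3,7,9,13,17\pmod{20}$ cases, where the paper glosses over the step of raising a sum-of-two-squares representation of $u_\ell$ (or $u_\ell\vf$) to the $m_\ell$-th power and collapsing the resulting $\vf^{m_\ell\epsilon_\ell}$ to $\vf^{\epsilon_\ell}$; your observation that $m_\ell-1$ is even so the extra factor is a square of a unit is the right bookkeeping.
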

\begin{proof}
    For each $\ell\in[n]$, consider $s_\ell$ and $t_\ell$ defined as follows: 
    \begin{itemize}
        \item If $p_\ell=2$ then $s_\ell=t_\ell=1$. 
        \item If $p_\ell\equiv1,3,7,9,13,17\pmod{20}$ then $s_\ell$ and $t_\ell$ are as computed using \propref{prop:irreds SOTS}. 
        \item If $2\mid m_\ell$ then $s_\ell=u_\ell^{\nicefrac{m_\ell}{2}}$ and $t_\ell=0$. 
    \end{itemize}
Now, we do the standard (inductive) trick where $(s_\ell^2+t_\ell^2)(s_{\ell'}^2+t_{\ell'}^2)=(s_\ell s_{\ell'}+t_\ell t_{\ell'})^2+(s_\ell t_{\ell'}-t_\ell s_{\ell'})^2$. Since we have that $s_\ell^2+t_\ell^2$ equals either $u_\ell$ or $u_\ell\vf$, the resulting product from performing the trick $n$ times gives a sum of two squares $s^2+t^2$ equal to $x\vf^{n'}$ where $0\le n'\le n$. If $2\mid n'$ then we return the pair $(s\vf^{\nicefrac{-n'}{2}},t\vf^{\nicefrac{-n'}{2}})$, and otherwise we return $(s\vf^{\nicefrac{(1-n')}{2}},t\vf^{\nicefrac{(1-n')}{2}})$. 

For the runtime analysis, simply factor $x$ using \algref{alg:factor quickly} and then for each resulting factor, apply \propref{prop:irreds SOTS} after checking that one of the three criteria holds (immediately halting and rejecting if any factor fails). 
\end{proof}

\end{document}